\newtheorem{theorem}{Theorem}[section] 
\newtheorem{lemma}[theorem]{Lemma}     
\newtheorem{proposition}[theorem]{Proposition}
\numberwithin{equation}{section}
\theoremstyle{definition}
\newtheorem{question}{Question}
\newtheorem{example}{Example}
\newtheorem{remark}{Remark}
\newcommand{\pres}[3]{\textnormal{#1} \langle #2 \mid #3 \rangle}
\DeclareMathOperator{\pos}{\normalfont\textsc{pos}}
\DeclareMathOperator{\red}{\normalfont\textsc{red}}
\DeclareMathOperator{\cred}{\normalfont\textsc{cred}}
\DeclareMathOperator{\any}{\normalfont\textsc{any}}
\DeclareMathOperator{\ovl}{\normalfont\textsc{ovl}}
\DeclareMathOperator{\Pre}{Pre}
\DeclareMathOperator{\FP}{FP}
\title[One-relator groups and units of special inverse monoids]{On one-relator groups and units of special one-relation inverse monoids}
\date {\today}
\author{Carl-Fredrik Nyberg-Brodda}
\thanks{The author gratefully acknowledges funding from the Dame Kathleen Ollerenshaw Trust, which is supporting his research at the University of Manchester.}
\address{Department of Mathematics, Alan Turing Building, University of Manchester, United Kingdom.}
\email{carl-fredrik.nybergbrodda@manchester.ac.uk}
\keywords{Special inverse monoid, one-relator group, one-relation monoid.}
\subjclass[2020]{Primary 20F05; Secondary 20M18, 20M05.}
\begin{document}

\begin{abstract}
This note investigates and clarifies some connections between the theory of one-relator groups and \textit{special} one-relation inverse monoids, i.e. those inverse monoids with a presentation of the form $\pres{Inv}{A}{w=1}$. We show that every one-relator group admits a special one-relation inverse monoid presentation. We subsequently consider the classes $\any,\red, \cred, \pos$ of one-relator groups which can be defined by special one-relation inverse monoid presentations in which the defining word is arbitrary; reduced; cyclically reduced; or positive, respectively. We show that the inclusions $\any \supset \cred \supset \pos$ are all strict. Conditional on a natural conjecture, we prove $\any \supset \red$. Following this, we use the Benois algorithm recently devised by Gray \& Ru\v{s}kuc to produce an infinite family of special one-relation inverse monoids which exhibit similar pathological behaviour (which we term \textit{O'Haresque}) to the O'Hare monoid with respect to computing the minimal invertible pieces of the defining word. Finally, we provide a counterexample to a conjecture by Gray \& Ru\v{s}kuc that the Benois algorithm always correctly computes the minimal invertible pieces of a special one-relation inverse monoid.
\end{abstract}

\maketitle


\section{Introduction}

\noindent The explosive development of combinatorial group theory as a central part of combinatorial algebra was one of the great mathematical successes of the 20th century. Ever since its genesis in the work by Dyck \cite{Dyck1882} and Dehn \cite{Dehn1911, Dehn1914} around the turn of the century, its numerous connections with logic, decidability theory, formal language theory, combinatorics -- among many others -- as well as its spectacularly successful offshoot in \textit{geometric} group theory, have all served to solidify its place as a foundational part of modern research in group theory. However, the importance of combinatorial \textit{semigroup} theory for this development is often somewhat unduly overlooked. Working around the same time as Dehn, Thue \cite{Thue1914} initiated in 1914 what would become combinatorial semigroup theory. The word problem for semigroups took centre-stage; Church \cite{Church1937} proved this problem to be undecidable in the finitely generated case, and Markov \cite{Markov1947a, Markov1947b} and Post \cite{Post1947} subsequently improved this to the finitely presented case. This discovery of undecidable problems in combinatorial semigroup theory would spur further research into producing similar results for groups. All subsequent examples of undecidable problems produced for groups at this time were directly dependent on the existence of undecidable problems for semigroups. For example, the first example(s) of an undecidable problem for a finitely presented group, i.e. the constructions by Novikov \cite{Novikov1952} and Boone \cite{Boone1958} of a finitely presented group with undecidable word problem, were both directly based on the construction by Turing \cite{Turing1950} of a finitely presented cancellative semigroup with undecidable word problem, which in turn was based on the aforementioned result by Markov and Post.\footnote{Turing's paper contained many issues, some of which were fixed by Boone \cite{Boone1958b}. Novikov \& Adian \cite{Novikov1958b} later published a separate proof of Turing's result to avoid these issues.} Similarly, the Adian-Rabin theorem \cite{Adian1955, Rabin1958} of the unrecognisability of ``Markov properties'' of finitely presented groups grew out -- as the name suggests! -- of the proof of the same result for finitely presented semigroups by Markov \cite{Markov1951}.

Thus the theory of combinatorial semigroup theory has played, and continues to play, an important r\^ole in combinatorial group theory. Recently, efforts have been made to further understand the foundations of combinatorial \textit{inverse} semigroup theory. An \textit{inverse} monoid is a monoid $M$ such that for every $x \in M$, there exists a unique $y \in M$ such that $xyx = x$ and $yxy = y$. In particular, every group is an inverse monoid; thus the class of inverse monoids forms an intermediate class between groups and monoids. Inverse monoids, much as monoids and groups, admit presentations, and questions one can ask in combinatorial group and semigroup theory can therefore also be asked about inverse monoids. Much exotic behaviour has been discovered, but few fully general results, and so an exhaustive theory to serve as a foundation still appears rather elusive. The object of this article is to shed some light on a small corner of this foundation. Before we can do this, however, it is important to understand what kinds of results one might expect to form part of this foundation.

As part of the origins of a program to solve the word problem for all one-relation monoids $\pres{Mon}{A}{u=v}$ -- a problem which remains open even today, see the recent survey \cite{NybergBrodda2021b} -- Adian \cite{Adian1960} identified the \textit{special} case $\pres{Mon}{A}{w=1}$ as particularly amenable to study. In particular, he solved the word problem for any such monoid. The proof first shows that this word problem reduces to the same problem for the group of units of the monoid in question, and then demonstrates that this group is a one-relator group $\pres{Gp}{B}{r=1}$. As the word problem is since 1932 well-known to be decidable for one-relator groups by Magnus \cite{Magnus1932}, this yields the result. An important fact is that the class of one-relator groups which can appear as the group of units of a special one-relation monoid is strictly smaller than the class of all one-relator groups. Specifically, Adian proved that the group of units of $\pres{Mon}{A}{w=1}$ is a one-relator group $\pres{Gp}{B}{r=1}$, where $r \in B^\ast$ is some \textit{positive} word. One-relator groups which admit some such presentation are called \textit{positive}. Though not much is known about this class in generality, Baumslag \cite{Baumslag1971} proved that all positive one-relator groups are residually solvable. It is not hard to see, but requires more than trivial arguments, that the free abelian $\mathbb{Z}^2 = \pres{Gp}{a,b}{aba^{-1}b^{-1}=1}$ is not a positive one-relator group. This result was proved by Magnus already in 1930, using the \textit{Hauptform des Freiheitssatzes} \cite[\S6, p. 159]{Magnus1930}, see Otto \cite{Otto1988} for a short proof. In particular, the group of units of a special one-relation monoid is never isomorphic to $\mathbb{Z}^2$. On the other hand, the fundamental group $\Pi = \pres{Gp}{a,b}{abab^{-1} = 1}$ of the Klein bottle \textit{is} a positive one-relator group, as $\Pi \cong \pres{Gp}{c,d}{c^2d^2 = 1}$. As $\mathbb{Z}^2$ embeds as an index $2$ subgroup into $\Pi$, it follows that positivity is not a virtual property, nor one that is inherited by subgroups; it is thus a rather elusive property of one-relator groups.\footnote{To the best of the author's knowledge, it is not known whether one can recognise if a one-relator group is positive or not; that is, the isomorphism problem for positive one-relator groups relative to the class of one-relator groups seems to be an open problem.}

The above paragraph shows that there is already some interplay between combinatorial semigroup and group theory. One might thus like to inquire as to the \textit{monoid}-theoretic properties of certain one-relator groups. For example, one might ask: which one-relator groups admit a presentation of the form $\pres{Mon}{A}{w=1}$? Perrin \& Schupp \cite{Perrin1984} answered this: a one-relator group admits a presentation $\pres{Mon}{A}{w=1}$ if and only if it is a positive one-relator group (the ``only if'' direction is trivial). As an example of this, the group $\Pi$ above is isomorphic to $\pres{Mon}{a,b}{abba=1}$. Thus we have an instance of a monoid-theoretic problem being reduced to a group-theoretic problem. It is natural to ask a similar question for special one-relator \textit{inverse} monoids; the purpose of this note is to give some words in response to this question, following recent fundamental investigations by Gray \& Ru\v{s}kuc. We first give some indication why working with special one-relation inverse monoids is significantly harder than the ordinary monoid case. 

A principal issue lies in the fact that \textit{free} inverse monoids (i.e. special zero-relation inverse monoids) are already significantly more difficult to study than free monoids and free groups. Indeed, before the explicit constructions by Scheiblich \cite{Scheiblich1972, Scheiblich1973} and the solution of the word problem by Munn \cite{Munn1974}, essentially nothing was known about free inverse monoids except their existence, which in turn was first noted in 1961 by Wagner \cite{Wagner1961}. Indeed, even the fact that the free inverse monoid on countably many generators embeds in the free inverse monoid on two generators, an obvious fact for free groups resp. monoids, is non-trivial to demonstrate \cite{Reilly1972}. One striking and easily formulated difficulty comes from the fact that, quite unlike free groups, no free inverse monoid of rank $n \geq 1$ is finitely presentable as a monoid \cite{Schein1975} (this result has very recently been extended from \textit{finitely presentable} to the even weaker property $\FP_2$ by Gray \& Steinberg \cite{Gray2021b}). Many other properties of free groups and monoids fail to transfer to free inverse monoids. For example, the \textit{Diophantine problem}, i.e. the problem of solving equations, in free inverse monoids is undecidable in general \cite{Rozenblat1985}, while the corresponding problems in free monoids and groups were solved by Makanin \cite{Makanin1977, Makanin1982}. See also \cite{Rozenblat1979, Calbrix1997}. 

Thus it should not, perhaps, come as a surprise to learn that the word problem for inverse monoids is, in general, rather difficult to solve. After Munn's solution to the problem for free inverse monoids, the first systematic study of the word problem for finitely presented inverse monoids was initiated by Stephen in his Ph.D. thesis \cite{Stephen1987}, via an algorithmic procedure analogous to Dehn's construction of the \textit{Gruppenbild}. This method, known as \textit{Stephen's procedure}, is now a cornerstone of the area \cite{Margolis1987, Stephen1990, Margolis1990, Stephen1991, Stephen1992, Stephen1993, Stephen1998}. For example, it was proved that for \textit{any} words $w_i \in (A \cup A^{-1})^\ast$ for $1 \leq i \leq n$, the monoid $\pres{Inv}{A}{w_i^2 = w_i \: (1 \leq i \leq n)}$ has decidable word problem \cite{Stephen1992}. Furthermore, if $e,f \in (A \cup A^{-1})^\ast$ are words equal to $1$ in the free group, then the word problem for $\pres{Inv}{A}{e=f}$ is decidable \cite{Margolis1993, Birget1994}, see also \cite{Silva1992, Margolis2005, Hermiller2010}. A natural question, seeking to extend the results by Magnus and Adian, is: do all special one-relation inverse monoids $\pres{Inv}{A}{w=1}$ have a decidable word problem? This question was long known to be difficult; Ivanov, Margolis \& Meakin \cite{Ivanov2001} proved that decidability of this problem, even only when $w$ is a reduced word, would imply decidability of the word problem for all one-relation monoids.\footnote{Their proof of this fact contains a (small) gap, see Nyberg-Brodda \cite[\S6.2]{NybergBrodda2021b} for a fix.}

In a recent twist, however, the answer turned out to be negative: Gray \cite{Gray2020} proved the existence of a special one-relation inverse monoid with undecidable word problem. One of the difficulties in attempting to mimic Adian's theory of special one-relation monoids in the inverse case comes from the present inability to understand the r\^ole of the group of units. Gray \& Ru\v{s}kuc \cite{Gray2021} have recently demonstrated much pathological behaviour for the group of units of special one-relation inverse monoids. Indeed, there is currently no known algorithm for computing the group of units, given a presentation $\pres{Inv}{A}{w=1}$. There are many open problems and mysteries in this area; this article represents an attempt to understand it better.

An overview of the article is as follows. First, in \S\ref{Sec:_Pieces_and_Benois_intro} we give some more detailed background on the group of units and invertible pieces of a special one-relation inverse monoid, and describe the \textit{Benois algorithm} introduced by Gray \& Ru\v{s}kuc, and its associated \textit{Benois conjectures}. In \S\ref{Sec:_OR_groups_classes}, we define four classes of one-relator groups, defined via special inverse monoid presentations, and study their properties. In \S\ref{Sec:_Ohareseque_family}, we prove the existence of an infinite family of special one-relation inverse monoids exhibiting the same pathological behaviour as the \textit{O'Hare monoid}. We also prove that the monoid $\pres{Inv}{a,b}{a^2b^2a^2bab=1}$ is, surprisingly, a group. Finally, in \S\ref{Sec:_Counterexample_Bob_Nik} we present a counterexample to a conjecture by Gray \& Ru\v{s}kuc, showing that the Benois algorithm does not always correctly compute the invertible pieces of a special one-relation inverse monoid. 

\clearpage

\section{Terminology and definitions}\label{Sec:_Pieces_and_Benois_intro}

\noindent The reader is assumed to be familiar with the theory of inverse monoids; for this, we refer the reader to Petrich \cite{Petrich1984}. In particular, an \textit{inverse} monoid $M$ is one satisfying the following law: for every $x \in M$, there exists a unique $y \in M$ such that $xyx = x$ and $yxy = y$. This unique ``pseudo-inverse'' $y$ of $x$ is denoted $x^{-1}$. The construction of the free inverse monoid on a set and the associated \textit{Wagner congruence} can be found in \cite[Chapter~VIII]{Petrich1984}. We will also assume the reader is familiar with the theory of presentations, e.g. from \cite{Adian1966, Magnus1966, Stephen1987}. We will denote monoid, group, resp. inverse monoid presentations by $\pres{Mon}{A}{R}, \pres{Gp}{A}{R}$ resp. $\pres{Inv}{A}{R}$. We refer the reader to Gray \& Ru\v{s}kuc \cite{Gray2021} for an in-depth background to the material. 

We will fix some notation. For a (finite) alphabet $A$, we will denote by $A^\ast$ the free monoid over $A$. This consists of all words over $A$ together with the operation of word concatenation, and its identity -- the empty word -- is denoted $1$. Equality of words in $A^\ast$, i.e. graphical equality, is denoted $\equiv$. We will associate to $A$ an alphabet of formal inverse symbols $A^{-1}= \{ a^{-1} \colon a \in A\}$, with $A \cap A^{-1} = \varnothing$. We let $\overline{A} = A \cup A^{-1}$. We say that a word $w \in \overline{A}^\ast$ is (freely) \textit{reduced} if it contains no subword of the form $aa^{-1}$ or $a^{-1}a$ for $a \in A$. We say that $w$ is \textit{cyclically} reduced if it is reduced and not of the form $aw'a^{-1}$ or $a^{-1}w'a$ for some $a \in A$ and $w' \in \overline{A}^\ast$. The free group on $A$ will be defined as the group with underlying set all reduced words in $\overline{A}$, and multiplication given by concatenation, then reducing the resulting word. These inverse symbols will also be used for the free inverse monoid. The following lemma will be used consistently (and implicitly) throughout this article.

\begin{lemma}[{E.g. \cite[Lemma~2.2(iii)]{Gray2021}}]
Let $w \in \overline{A}^\ast$, and let $I$ be an inverse monoid generated by $A$. Let $w'$ be the free reduction of $w$. If $w$ is right invertible in $I$, then $w =_I w'$. 
\end{lemma}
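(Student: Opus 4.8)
The plan is to work with the canonical (surjective) homomorphism $\pi\colon\overline{A}^\ast\to I$ which sends each $a\in A$ to its image $\bar a$ in $I$ and each $a^{-1}\in A^{-1}$ to the (unique) inverse $\bar a^{-1}$ of $\bar a$ in $I$; observe that under $\pi$ the formal involution of $\overline{A}^\ast$ is carried to the genuine involution $x\mapsto x^{-1}$ of $I$. The free reduction $w'$ is obtained from $w$ by a finite sequence of deletions of the form $u_1aa^{-1}u_2\mapsto u_1u_2$ or $u_1a^{-1}au_2\mapsto u_1u_2$, and this rewriting is confluent, so $w'$ is well defined. It therefore suffices to prove the one-step claim: \emph{if $v\in\overline{A}^\ast$ is right invertible in $I$ and $v\equiv u_1aa^{-1}u_2$ then $v=_Iu_1u_2$} (the case $v\equiv u_1a^{-1}au_2$ is handled identically, using the letter $b:=a^{-1}$ in place of $a$, for which $\pi(b)^{-1}=\bar a$). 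Given the one-step claim, one inducts on the number of reductions needed to pass from $w$ to $w'$: each intermediate word evaluates in $I$ to the same element as $w$, hence remains right invertible, so the claim applies at every step.

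For the one-step claim, put $x=\pi(u_1)$, $y=\pi(u_2)$ and $f=\bar a\,\bar a^{-1}$, so that $f$ is an idempotent of $I$ and $\pi(v)=xfy$. First I would recall the standard fact that an element $z$ of an inverse monoid has a right inverse if and only if $zz^{-1}=1$: if $zt=1$ then from $z=zz^{-1}z$ we get $1=zt=zz^{-1}zt=zz^{-1}$, and the converse is immediate. Since $xfy$ is right invertible, so is $xf$ (if $xfy\cdot t=1$ then $xf\cdot(yt)=1$); hence $(xf)(xf)^{-1}=1$. As $f$ is idempotent, $f^{-1}=f$ and $(xf)^{-1}=fx^{-1}$, so this reads $xffx^{-1}=xfx^{-1}=1$.

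It then remains only to deduce $xf=x$ from $xfx^{-1}=1$, and the single ingredient needed is that idempotents of an inverse monoid commute. Multiplying $xfx^{-1}=1$ on the right by $x$ gives $xfx^{-1}x=x$; but $f$ and $x^{-1}x$ are idempotents, so they commute, whence $xfx^{-1}x=x(fx^{-1}x)=x(x^{-1}xf)=(xx^{-1}x)f=xf$. Thus $xf=x$, and therefore $\pi(v)=xfy=xy=\pi(u_1u_2)$, which proves the one-step claim and, by the induction above, the lemma. I do not anticipate a genuine obstacle: the only point needing care is the bookkeeping that every intermediate word in the reduction remains right invertible, and this is automatic once one notes that each such word represents the same element of $I$ as $w$.
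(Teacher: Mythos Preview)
Your argument is correct. The paper does not give its own proof of this lemma; it merely cites \cite[Lemma~2.2(iii)]{Gray2021}, so there is no ``paper's proof'' against which to compare in detail. Your route---reduce to the one-step deletion, observe that a prefix of a right-invertible element is right-invertible, deduce $xfx^{-1}=1$ for the idempotent $f=\bar a\bar a^{-1}$, and then use commutativity of idempotents to get $xf=x$---is exactly the standard proof one finds in the literature (and is essentially how Gray \& Ru\v{s}kuc argue). The bookkeeping you flag at the end is handled correctly: once the one-step claim is proved, each intermediate word equals $w$ in $I$ and so inherits right-invertibility, allowing the induction to proceed.
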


Here, a word is said to be (left/right) \textit{invertible} in a monoid if it represents a (left/right) invertible element of the monoid, see \S\ref{Subsec:Special_monoids}.

For two words $u, v \in \overline{A}^\ast$, we will let $[u, v] = uvu^{-1}v^{-1}$ denote their commutator. For a set $X \subseteq \overline{A}^\ast$ we let $\langle A \rangle$ denote the submonoid of the free group on $A$ generated by $X$. We let $X^\ast$ denote the submonoid of the free monoid $\overline{A}^\ast$ generated by $X$. Thus $\langle X \rangle$ is, in general, distinct from $X^\ast$ -- the former, for example, contains only reduced words. On the other hand, if $X \subseteq A^\ast$, then $\langle X \rangle = X^\ast$. For a word $w \in \overline{A}^\ast$, we let $\Pre(w)$ denote the set of \textit{prefixes} of $w$, i.e. if $w \equiv a_1 a_2 \cdots a_n$, where $a_i \in \overline{A}$, then 
\[
\Pre(w) = \Pre(a_1 a_2 \cdots a_n) = \{ a_1 a_2 \cdots a_j \mid 0 \leq j \leq n \},
\]
where $j=0$ is taken to mean that the word $a_1 a_2 \cdots a_j$ is empty. Thus, we have in particular $\varepsilon, w \in \Pre(w)$. A prefix of $w$ is \textit{proper} if it is not $w$ itself. Thus we will speak of e.g. the set of proper non-empty prefixes of a word. We define \textit{suffix} analogously. A word is \textit{self-overlap free} if none of its proper non-empty prefixes is also a suffix.

\subsection{Special monoids}\label{Subsec:Special_monoids}

We recall some of the terminology of special presentations. See \cite{Zhang1992b, Gray2021, NybergBrodda2020b} for further details. Let 
\begin{equation}\label{Eq:Special_presentation}
M = \pres{}{A}{w_1 = 1, w_2 = 1, \dots, w_k=1},
\end{equation}
where the presentation in (\ref{Eq:Special_presentation}) is either a monoid presentation or an inverse monoid presentation. Then $M$ is called \textit{special}. Thus we will speak of special inverse monoids and special monoids (and thereby implicitly assume that a presentation of the form given in (\ref{Eq:Special_presentation}) is simultaneously provided). The theory of special monoids is well-developed, originating in work by Adian \cite{Adian1960} and Makanin \cite{Makanin1966, Makanin1966b, NybergBrodda2021c}, and developed further by Zhang \cite{Zhang1991, Zhang1992a, Zhang1992b, Zhang1992c} and Nyberg-Brodda \cite{NybergBrodda2020b}, see also \cite[Chapters~3 \& 5]{MyThesis}. Of particular importance for special monoids is the notion of the \textit{minimal invertible pieces} of the presentation.  

We present this notion only for special inverse monoids, as it is well elaborated on for special (ordinary) monoids in the aforementioned references. Let $M$ be a special inverse monoid presentation with generators and relations as in (\ref{Eq:Special_presentation}). If $w \in \overline{A}^\ast$ represents a unit of $M$, i.e. an element $m \in M$ such that $mm^{-1} = m^{-1}m = 1$, then we say that $w$ is \textit{invertible}. We analogously define \textit{right} and \textit{left} invertible words. We say that an non-empty invertible word $u \in \overline{A}^\ast$ is \textit{minimal} if none of its non-empty proper prefixes is invertible. The set of minimal words forms a biprefix code as a subset of the free monoid $\overline{A}^\ast$. Every defining word $w_i$ for $1 \leq i \leq k$ is an invertible word, as $w_i =_M 1$. Thus it is easy to see that we can uniquely factorise every such word $w_i$ into minimal words. Note that this uses no fact about the structure of the monoid $M$, and the factorisation is not, in general, an effective one. Thus, for every $1 \leq i \leq k$, we uniquely factorise $w_i \equiv w_{i,1} w_{i,2} \cdots w_{i,\ell_i}$, where $w_{i,j}$ for $1 \leq j \leq \ell_i$ is a minimal word. The set of all minimal words arising in this way shall be denoted $\Lambda$, and called the set of \textit{presentation pieces} (or simply \textit{minimal invertible pieces}) of $M$. That is, 
\[
\Lambda = \bigcup_{i=1}^{k} \bigcup_{j=1}^{\ell_i} \{ w_{i,j} \} \subseteq \overline{A}^\ast.
\]
It can be shown that $\Lambda$ generates the subgroup consisting of all units of $M$ (see \cite[Proposition~4.2]{Ivanov2001}, for the stronger form here stated see \cite[Theorem~1.3]{Gray2021}). This subgroup is denoted $U(M)$, and is called the \textit{group of units} of $M$. For a special $k$-relation monoid 
\[
M=\pres{Mon}{A}{w_1=1, w_2 = 1, \dots, w_k=1}
\]
the group of units $U(M)$ is always a $k$-relator group, by Makanin \cite{Makanin1966}. In particular, as already proved by Adian \cite[Theorem~8]{Adian1966}, $U(M)$ is a one-relator group if $M$ is a special one-relation monoid. Although we do not directly study the group of units in this article, we highlight the fact that by contrast, in the inverse case $M = \pres{Inv}{A}{w=1}$, it need not be the case that $U(M)$ is a one-relator group; Gray \& Ru\v{s}kuc \cite[Theorem~7.1]{Gray2021} provide an example in which $U(M)$ is isomorphic to the free product of two copies of the fundamental group of a surface of genus $2$, which is not a one-relator group by \cite[Proposition~5.13]{Lyndon1977}. This demonstrates the contrast between special inverse and special ``ordinary'' monoids.

In general, computing the pieces of the presentation (\ref{Eq:Special_presentation}) is an undecidable problem; indeed, were it decidable, then one could decide whether or not $M$ is a group, for $M$ is clearly a group if and only if for every $a \in A$, we have $a\in \Lambda$ or $a^{-1} \in \Lambda$; but it is, in general, undecidable whether $M$ is a group. The corresponding statement, invertible pieces defined \textit{mutatis mutandis}, is true for special (ordinary) monoids. For special \textit{one-relation} monoids $\pres{Mon}{A}{w=1}$, on the other hand, there is such an algorithm; this is given by Adian's overlap algorithm, see \cite{Adian1966, Lallement1974}. We do not provide a full description of this (simple) algorithm here, but point out that if the defining word $w$ has no overlaps with itself (i.e. if no non-empty proper prefix of $w$ is also a suffix of $w$), then $\Lambda = \{ w \}$ (or $\Lambda = \varnothing$), i.e. there are no non-trivial pieces. For example, the group of units of $\pres{Mon}{a,b}{ababb = 1}$ is trivial, as $ababb$ has no overlaps with itself. 

A natural question is whether or not an analogue of Adian's algorithm can be applied to special one-relation \textit{inverse} monoids. This was shown not to be the case by Margolis, Meakin \& Stephen \cite{Margolis1987}, who considered the \textit{O'Hare monoid}
\[
\mathcal{O}  =\pres{Inv}{a,b,c,d}{abcdacdadabbcdacd=1} = \pres{Inv}{a,b,c,d}{r = 1}.
\]
Note that the defining word $r$ is (1) positive; and (2) self-overlap free. Were Adian's overlap algorithm applied to this word, it would hence conclude that the decomposition into minimal invertible factors of $r$ is trivial. However, using Stephen's procedure (see \cite{Stephen1987}), one can in fact show that the factorisation of $r$ into minimal invertible pieces is as
\begin{equation}\label{Eq:OHare_factors}
r \equiv (abcd)(acd)(ad)(abbcd)(acd),
\end{equation}
i.e. $\Lambda = \{ ad, acd, abcd, abbcd \}$, whereas the Adian algorithm would incorrectly yield $\Lambda = \{ abcdacdadabbcdacd \}$. We remark that the word problem for $\mathcal{O}$ was solved by Dolinka \& Gray \cite[Proposition~5.4]{Dolinka2021}. 

In view of the above example, if $w$ is a word which is (1) positive; and (2) self-overlap free; and (3) the factorisation into minimal invertible factors of $w$ in $\pres{Inv}{A}{w=1}$ is non-trivial, then we say that $\pres{Inv}{A}{w=1}$ is \textit{O'Haresque}. In \S\ref{Sec:_Ohareseque_family} we shall provide an infinite family of considerably simpler examples of O'Haresque monoids. In fact, we shall provide an infinite family of special one-relation inverse monoids $M = \pres{Inv}{A}{w=1}$ such that (1) and (2) hold, and furthermore such that (3') $M$ is a group. This is the first known example of a monoid with these three properties.

\subsection{The Benois algorithm}

In view of the O'Hare monoid, there was, for some time, no natural candidate for an algorithm to compute the minimal invertible pieces (or indeed the group of units) of a special inverse one-relation monoid. To amend this, Gray \& Ru\v{s}kuc \cite{Gray2021} recently introduced the \textit{Benois algorithm}. We now give a full description of this rather short algorithm here, as we shall frequently use it in this article. 

Let $M = \pres{Inv}{A}{w_i = 1 \: (i \in I)}$. Let 
\[
X = \bigcup_{i \in I} \left( \Pre(w_i) \cup \Pre(w_i^{-1})\right)  \subseteq \overline{A}^\ast.
\]
We call $X$ the set of \textit{Benois generators} associated to $M$, and we call the submonoid $\langle X \rangle$ of the free group on $\overline{A}^\ast$ generated by $X$ the \textit{Benois submonoid} associated to $M$. Every element of $\langle X \rangle$ is right invertible. By Benois' theorem \cite{Benois1969}, we can algorithmically test, for every prefix $p$ of some defining word $w_i$, whether $p^{-1}$ represents an element of $\langle X \rangle$. If it does, then $p^{-1}$ is right invertible; so $p$ is left invertible, and thereby also invertible. Thus, this gives a method for factorising the defining words into invertible pieces (although these need not be minimal). Explicitly, for every $i \in I$, we decompose 
\[
w_i \equiv w_{i,1} w_{i,2} \cdots w_{i,k_i}
\]
in such a way that for every proper prefix $p$ of $w_i$ we have 
\[
p^{-1}\in \langle X \rangle \iff p \equiv w_{i,1} w_{i,2} \cdots w_{i,j} \textnormal{ for some $1 \leq j \leq k_i$.}
\]

As mentioned earlier, the submonoid membership problem can be decided in any free group, so the Benois algorithm is an algorithm; indeed, Benois \cite{Benois1969} proved that the \textit{rational subset membership problem}, which generalises the submonoid membership problem, is decidable in any free group.\footnote{Ivanov, Margolis \& Meakin \cite[Lemma~5.4]{Ivanov2001} defined a submonoid analogue of Nielsen reductions for subgroups of free groups to give a direct proof of the decidability of the submonoid membership problem in free groups. However, their idea does not work as stated. In their terminology, the set $ X= \{ ab, b^{-1}, a\}$ is not N-reduced, as condition N2 is not satisfied with the triple $(V_1, V_2, V_3) = (ab, b^{-1}, a)$. If the proof were correct, then either (T1) or (T2) should thus be applicable to $X$; but neither operation is applicable. No proof of correctness is given (except by analogy with the subgroup case), and thus the issue seems at present unresolved. Nevertheless, Benois' result ensures that \cite[Lemma~5.4]{Ivanov2001}, if not its proof, remains true as stated. } The name of the Benois algorithm stems from the decidability of this latter problem. Gray \& Ru\v{s}kuc proved (see \cite[Theorem~4.5]{Gray2021}) that the Benois algorithm detects any factorisation that Adian's overlap algorithm does, and that furthermore it detects the factorisation (\ref{Eq:OHare_factors}) in the O'Hare monoid, thus strictly outperforming Adian's algorithm. 

Of course, given the undecidability of the problem, the Benois algorithm cannot possibly always correctly compute the minimal invertible pieces of a special inverse monoid. The following example shows that this failure can occur even in simple cases; in fact, the example (with $\operatorname{Mon}$ substituted for $\operatorname{Inv}$) was already given by Gray \& Ru\v{s}kuc \cite[Example~4.2]{Gray2021} for showing that the Adian algorithm does not always correctly compute the invertible pieces of a special monoid. 

\begin{example}
Consider the special three-relation inverse monoid 
\[
M = \pres{Inv}{a,b,c,d}{ab=1,cabd=1,cdd=1}.
\]
Then the Benois submonoid of the free group on $\{ a, b, c, d\}$ associated to $M$ is 
\begin{align*}
\langle X \rangle = \langle a, ab, c, ca, cab, &cabd, cd, cdd, b^{-1}, b^{-1}a^{-1}, d^{-1},  \\ & d^{-1}b^{-1}, d^{-1}b^{-1}a^{-1}, d^{-1}b^{-1}a^{-1}c^{-1}, d^{-1}d^{-1}c^{-1} \rangle. 
\end{align*}
Simplifying, we find 
\begin{align*}
\langle X \rangle = \langle a, ab, ca, cd, cab, cdd, cabd, b^{-1}a^{-1}, d^{-1}d^{-1}c^{-1}, d^{-1}b^{-1}a^{-1}c^{-1}\rangle.
\end{align*}
It is now a straightforward (but rather tedious!) task to verify that the inverse of no proper non-empty prefix of any defining relation are in this submonoid, e.g. by constructing an automaton as outlined in \cite[Remark~4.3]{Gray2021}. We leave this as an exercise for the interested reader. In particular, we find that the factorisation into invertible pieces discovered by the Benois algorithm is trivial. On the other hand, it is clear from $ab=1$ that $1 = cabd = cd$. Thus $d = cdd = 1$, so $d$ is invertible in $M$. Thus $c = cab = cabd = 1$, so $c$ is invertible in $M$. Thus $M \cong \pres{Inv}{a,b}{ab=1}$, the bicyclic monoid (Gray \& Ru\v{s}kuc, in their ordinary monoid example, incorrectly claim that all letters are invertible). 
\end{example}

Irrespective of the above example, and with the case of more than one relation put to the side, Gray \& Ru\v{s}kuc conjectured that the Benois algorithm always correctly computes the invertible pieces of a special \textit{one-relation} inverse monoid. In \S\ref{Sec:_Counterexample_Bob_Nik}, we present a counterexample to this conjecture.  

In light of this counterexample, we will introduce some more precise terminology. We let the \textit{positive, cyclically reduced}, resp. the \textit{reduced Benois conjecture} be the conjectures that the Benois algorithm correctly computes the factorisation into minimal invertible pieces of $w$ in $\pres{Inv}{A}{w=1}$ whenever $w$ is a \textit{positive, cyclically reduced}, resp. \textit{reduced} word. All three conjectures remain open, even accounting for the results in this article; the defining relation word $w$ of the counterexample given in \S\ref{Sec:_Counterexample_Bob_Nik} is neither positive, cyclically reduced, nor reduced. 

\clearpage
\section{Classes of one-relator groups}\label{Sec:_OR_groups_classes}

\noindent In this section, we will investigate four families of finitely generated one-relator groups, where the classes are defined via their properties as inverse monoids. Specifically, we will consider the classes $\pos, \cred, \red$ and $\any$, defined as in the following table. \\

\begin{center}
\begin{tabular}{| c | m{8cm} |}
\hline \rule{0pt}{1.5\normalbaselineskip}
 $\pos$ & One-relator groups $G$ s.t. $G \cong \pres{Inv}{A}{w=1}$ for some \textit{positive} word $w \in A^\ast.$\\ 
 \hline\rule{0pt}{1.5\normalbaselineskip}
 $\cred$ & One-relator groups $G$ s.t. $G \cong \pres{Inv}{A}{w=1}$ for some \textit{cyclically reduced} word $w \in \overline{A}^\ast$. \\  
\hline\rule{0pt}{1.5\normalbaselineskip} 
 $\red$ & One-relator groups $G$ s.t. $G \cong \pres{Inv}{A}{w=1}$ for some \textit{reduced} word $w \in \overline{A}^\ast$. 
 \\
 \hline\rule{0pt}{1.5\normalbaselineskip} 
 $\any$ & One-relator groups $G$ s.t. $G \cong \pres{Inv}{A}{w=1}$ for \textit{some} word $w \in \overline{A}^\ast$. 
 \\ \hline
\end{tabular}
\end{center}
\bigskip 

As every positive word is cyclically reduced, we have 
\[
\pos \subseteq \cred \subseteq \red \subseteq \any.
\]
The purpose of this section is to clarify and sharpen this sequence of inclusions. We first prove that $\any$ is the class of all one-relator groups. Next, we demonstrate that
\[
\pos \subset \cred \subset \any.
\]
Furthermore, if one additionally assumes the reduced Benois conjecture, then we demonstrate that $\red \subset \any$. 

Many questions remain. For example, is $\red=\cred$? The monoids $\pres{Inv}{A}{w=1}$ when $w \in \overline{A}^\ast$ is a cyclically reduced word are all $E$-unitary, but this need not be the case when $w$ is only reduced \cite{Ivanov2001}. Thus, there are special one-relation inverse monoids definable by a single reduced word which cannot be defined by a single cyclically reduced word. Nevertheless, it may be the case that $\red = \cred$, and that these differences vanish in the strong setting of groups (of course, all groups are $E$-unitary).

We shall begin by describing the relatively easy classes $\any$ and $\pos$.

\subsection{The class $\any$}

Of the four classes described, the easiest to describe is, in a sense, the class $\any$, by the following straightforward proposition.

\begin{proposition}\label{Prop:_OR_is_any}
Any one-relator group admits a special one-relation inverse monoid presentation. That is, the family $\any$ is precisely the class of all one-relator groups.
\end{proposition}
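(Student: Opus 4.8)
The plan is to start from an arbitrary one-relator group $G = \pres{Gp}{A}{r=1}$ and show directly that it admits a presentation of the form $\pres{Inv}{B}{w=1}$ for a suitable (larger) alphabet $B$ and word $w \in \overline{B}^\ast$. The natural first move is to reduce to the case where $r$ is cyclically reduced, since conjugating the relator and deleting free cancellation does not change the isomorphism type of $G$; so assume $r \in \overline{A}^\ast$ is cyclically reduced. The key idea is then to force the inverse monoid $\pres{Inv}{B}{w=1}$ to be a group by making every generator invertible, and to do this one wants $w$ to be built so that each letter of $B$ (or its inverse) appears as an invertible piece, while the group of units is exactly $G$.

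The concrete construction I would use is the following. Take $B = A$ together with enough auxiliary letters, and set $w$ to be a product of blocks. First, note that in $\pres{Inv}{B}{w=1}$ the prefixes of $w$ and of $w^{-1}$ are all right invertible, and if one can also arrange that they are left invertible then they become units; so the strategy is to cook up $w$ so that the prefix structure forces each $a \in A$ to be invertible. A clean way to achieve this: for each generator $a \in A$, introduce no new generator but instead prepend to the relator a word of the form $a a^{-1}$-free gadget that makes $a$ a piece — concretely, one can use the observation already recorded in the excerpt that if $v$ is self-overlap free then $\Lambda = \{v\}$, and conversely overlaps create pieces. I would take
\[
w \equiv \left( \prod_{a \in A} a\, t_a \right) r' \left( \prod_{a \in A} t_a^{-1} a^{-1} \right)^{?}
\]
style construction, but more carefully: the robust approach is to pick $w$ so that, reading left to right, the letters of $A$ can be peeled off one at a time as invertible pieces (because each is immediately followed by a "signpost" subword that already occurred invertibly), until the remainder is exactly $r$ (or a cyclic conjugate of $r$), whence $r =_M 1$ as well. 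Then $\Lambda \supseteq \{a : a \in A\}$, so $M$ is a group, and the relations among the units reduce — via the standard special-monoid machinery, i.e. \cite[Theorem~1.3]{Gray2021} identifying $U(M) = \langle \Lambda \rangle$ with the relations coming from expressing $w$ in the pieces — to the single relation $r = 1$. Hence $M \cong \pres{Gp}{A}{r=1} = G$.

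The main obstacle, and where the real work lies, is verifying that the group of units is exactly $G$ and not some proper quotient or something with extra relations: one must check both that no \emph{unwanted} letter or piece becomes invertible (so that the units don't collapse $G$ further) and that the \emph{only} relation holding among the pieces is $r=1$ (so that $U(M)$ is not a proper quotient of $G$). I expect to control the first by a Benois-algorithm / Stephen-procedure argument showing precisely which prefixes $p$ have $p^{-1} \in \langle X \rangle$, exploiting the freedom to choose the signpost subwords to be long, pairwise non-overlapping, and self-overlap free so that no accidental invertibility occurs. The second follows from the general theory of special inverse monoids once the piece set $\Lambda$ is pinned down: if $\Lambda = A \cup \{\text{the remainder piece } = r\}$ with the remainder piece equal in the free group to a word in the other pieces, then $U(M)$ is the one-relator group on $\Lambda$ modulo that single relation, which rearranges to $\pres{Gp}{A}{r=1}$. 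A final remark worth including is that since $\any$ is by definition contained in the class of all one-relator groups, and we have now shown the reverse containment, the two classes coincide, which is the asserted equality.
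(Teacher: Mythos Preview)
Your high-level strategy --- force every generator to be invertible so that the inverse monoid becomes a group --- is exactly right, but the proposal never produces a concrete word $w$, and the second half reveals a misunderstanding that makes the ``real work'' you describe unnecessary. Once every $a \in A$ is invertible in $M = \pres{Inv}{A}{w=1}$, the inverse monoid $M$ \emph{is} a group, and as a group it is presented by $\pres{Gp}{A}{w=1}$. There is no separate analysis of $\Lambda$, no risk of ``extra relations'', and no need to control ``unwanted invertibility'': we \emph{want} every letter invertible, and the only relation among the generators is the one relator $w$ itself. So the piece-set machinery and the Benois/Stephen arguments you sketch are not needed here.

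What you are missing is the simple device that makes this work cleanly: pad the relator with words that freely reduce to the identity. The paper takes
\[
w' \equiv \Big(\prod_{i=1}^k a_i a_i^{-1} a_i^{-1} a_i\Big)\, w\, \Big(\prod_{i=1}^k a_i a_i^{-1} a_i^{-1} a_i\Big),
\]
on the \emph{same} alphabet $A$ (no auxiliary letters $t_a$). Each prefix $\big(\prod_{i<j} a_i a_i^{-1} a_i^{-1} a_i\big) a_j$ is right invertible (being a prefix of the defining relator), hence equal to its free reduction $a_j$; so $a_j$ is right invertible. The symmetric suffix argument gives $a_j$ left invertible. Thus $\pres{Inv}{A}{w'=1}$ is a group, and since $w'$ equals $w$ in the free group, this group is $\pres{Gp}{A}{w'=1} = \pres{Gp}{A}{w=1} = G$. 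No further verification is required. Your attempts with self-overlap-free signposts and enlarged alphabets, and the proposed checks that $U(M)$ is not a proper quotient of $G$, are solving problems that do not arise.
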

\begin{proof}
Let $G = \pres{Gp}{A}{w=1}$, where $w \in \overline{A}^\ast$ is any word. Enumerate the generating set $A$ as $\{ a_1, a_2, \dots, a_k \}$, and let 
\[
w' \equiv \bigg(\prod_{i=1}^k a_ia_i^{-1}a_i^{-1}a_i\bigg) w \bigg(\prod_{i=1}^k a_ia_i^{-1}a_i^{-1}a_i\bigg).
\]
Now $w' = w$ in the free group on $A$, as $a_i a_i^{-1}a_i a_i^{-1} = 1$ for $1 \leq i \leq k$. Let $I = \pres{Inv}{A}{w'=1}$. We claim that $I \cong G$. To show this, we will show that $a_j$ is invertible in $I$ for every $1 \leq j \leq k$. From this, it will follow that $I$ is a group; thus $I = \pres{Gp}{A}{w'=1}$, but from $w'=w$ in the free group on $A$, this latter group is just $\pres{Gp}{A}{w=1}$.

Note that for every $1 \leq j \leq k$, we have that
\[
\bigg( \prod_{i=1}^{j-1} a_i a_i^{-1}a_i^{-1} a_i \bigg) a_j 
\]
is a prefix of $w'$, and hence is right invertible in $I$; thus it is equal in $I$ to its free reduction in $I$. But this free reduction is simply $a_j$, and so $a_j$ is right invertible in $I$ for every $1 \leq j \leq k$. Analogously, for every $1 \leq j \leq k$, we have that
\[
\bigg( \bigg(\prod_{i=1}^{j-1} a_ia_i^{-1}a_i^{-1}a_i\bigg) a_j \bigg)^{-1} \equiv a_j^{-1} \bigg(\prod_{i=1}^{j-1} a_i^{-1}a_ia_ia_i^{-1}\bigg) 
\]
is an inverse of a suffix of $w'$, and hence is right invertible in $I$; thus it is equal in $I$ to its free reduction. But its free reduction is simply $a_j^{-1}$. We conclude that $a_j^{-1}$ is right invertible in $I$, and hence that $a_j$ is left invertible, for every $1 \leq j \leq k$. Thus $a_j$ is invertible for every $1 \leq j \leq k$, and the result follows. 
\end{proof}

Thus, the word $r \equiv \prod_{i=1}^k a_ia_i^{-1}a_i^{-1}a_i$ is a ``group-making'' word, in the sense that $\pres{Inv}{A}{rwr=1}$ is a group for any word $w \in \overline{A}^\ast$. We shall revisit, in a more refined fashion, the idea of ``group-making'' words in \S\ref{Sec:_Ohareseque_family}.

\subsection{The class $\pos$}

Perrin \& Schupp \cite{Perrin1984} proved that a one-relator group $\pres{Gp}{A}{w=1}$ admits a presentation $\pres{Mon}{A}{w'=1}$ if and only if the group is a positive one-relator group. We first note that their proof of this fact can be repeated line by line in the inverse monoid case, and utilising the fact that if $\pres{Mon}{A}{w=1}$ is a group, then so too is $\pres{Inv}{A}{w=1}$. That is, we have:

\begin{proposition}\label{Prop:_pos_is_pos}
A one-relator group admits a positive special one-relation inverse monoid presentation if and only if it is a positive one-relator group. 
\end{proposition}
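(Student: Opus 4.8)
The plan is to deduce Proposition~\ref{Prop:_pos_is_pos} from the combination of the Perrin--Schupp theorem for ordinary monoids and the elementary fact that a special inverse monoid presentation $\pres{Inv}{A}{w=1}$ defines a group whenever the corresponding special monoid presentation $\pres{Mon}{A}{w=1}$ does. The ``only if'' direction is the trivial one: if $G \cong \pres{Inv}{A}{w=1}$ with $w \in A^\ast$ positive, then the natural surjection from the monoid $\pres{Mon}{A}{w=1}$ onto the inverse monoid $\pres{Inv}{A}{w=1}$ shows that in the inverse monoid every generator $a \in A$ is invertible (this is given), and hence every generator is already invertible in $\pres{Mon}{A}{w=1}$; therefore $\pres{Mon}{A}{w=1}$ is a group, so it equals $\pres{Gp}{A}{w=1}$, which is then a positive one-relator group. (Alternatively, since $G$ is a quotient of the positive monoid and also maps onto it, one argues $G$ is exactly $\pres{Gp}{A}{w=1}$.)

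For the ``if'' direction, suppose $G$ is a positive one-relator group, so by Perrin--Schupp there is a positive word $w \in A^\ast$ with $G \cong \pres{Mon}{A}{w=1}$, and in particular this monoid is a group, i.e.\ every $a \in A$ is invertible in $\pres{Mon}{A}{w=1}$. The first key step is to observe that invertibility of a generator is witnessed by a finite sequence of elementary monoid rewrites using only the relator $w = 1$ (and the free monoid identity); such a rewriting sequence is preserved under the canonical homomorphism $\pres{Mon}{A}{w=1} \to \pres{Inv}{A}{w=1}$, since every defining relation of the monoid presentation is also a defining relation of the inverse monoid presentation. Hence each $a \in A$ is (two-sided) invertible in $I := \pres{Inv}{A}{w=1}$ as well. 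The second key step is then the standard fact that an inverse monoid in which every generator is invertible is a group: the set of units $U(I)$ is a subgroup containing a generating set, so $U(I) = I$. Finally, once $I$ is a group it is defined, as a group, by the single relation $w = 1$, i.e.\ $I \cong \pres{Gp}{A}{w=1} \cong G$, completing the proof.

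The ``repeated line by line'' phrasing in the statement suggests the author may instead want the self-contained version: re-run the Perrin--Schupp argument directly, replacing ``$\operatorname{Mon}$'' by ``$\operatorname{Inv}$'' throughout. In that case the steps are: (i) recall that Perrin--Schupp reduce the problem to showing that if $\pres{Gp}{A}{w=1}$ is a positive one-relator group then one can choose $w$ positive so that each generator has a positive left inverse and a positive right inverse detectable from the relator; (ii) verify that the combinatorial lemmas they use (concerning positive words, prefixes, and the structure of the relator) make no use of the ambient structure being a monoid rather than an inverse monoid, since they concern only equalities that hold in $\pres{Mon}{A}{w=1}$ and these equalities transfer verbatim to $\pres{Inv}{A}{w=1}$; (iii) conclude invertibility of each generator in the inverse monoid and hence that it is a group, as above.

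The main obstacle, and the only point requiring genuine care, is justifying that Perrin--Schupp's argument really does transfer without modification --- i.e.\ that nowhere do they rely on a generator failing to be invertible, or on the monoid not being an inverse monoid, or on a normal-form argument specific to the one-relator monoid structure. The cleanest way to sidestep this obstacle entirely is the first approach above: instead of re-reading their proof, simply invoke their theorem as a black box to produce the positive word $w$, then apply the purely formal ``monoid rewrites lift to inverse-monoid rewrites, and every generator invertible $\Rightarrow$ group'' argument. That reduces the whole proposition to a short, robust observation and avoids any dependence on the internal details of \cite{Perrin1984}.
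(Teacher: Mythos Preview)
Your ``if'' direction is correct and is essentially the argument the paper alludes to in the paragraph preceding the proposition (``utilising the fact that if $\pres{Mon}{A}{w=1}$ is a group, then so too is $\pres{Inv}{A}{w=1}$''). The paper's actual proof sketch differs only in that it unpacks Perrin--Schupp explicitly rather than citing it as a black box: it writes down the Nielsen move and the automorphism $\varphi(a)=a_1\cdots a_n a a_n\cdots a_1$ that turn a positive relator into one of the shape $a_1\cdots a_n w a_n\cdots a_1$, from which invertibility of every generator in the inverse monoid is visible by inspection. Your black-box route is shorter and just as valid; the paper's route is self-contained and shows exactly which positive word works.

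Your ``only if'' direction, however, is muddled. There is no natural \emph{surjection} $\pres{Mon}{A}{w=1}\to\pres{Inv}{A}{w=1}$ for positive $w$: the map $a\mapsto a$ is a homomorphism but misses $a^{-1}$. More seriously, you then try to deduce invertibility of generators in $\pres{Mon}{A}{w=1}$ from invertibility in $\pres{Inv}{A}{w=1}$, which is the wrong direction --- invertibility pushes forward along homomorphisms, it does not pull back. Your ``alternatively'' clause does not fix this. The genuinely trivial argument is: if $I=\pres{Inv}{A}{w=1}$ is a group, then $I$ coincides with its maximal group image, which is $\pres{Gp}{A}{w=1}$; since $w\in A^\ast$, this is a positive one-relator presentation. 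No detour through $\pres{Mon}{A}{w=1}$ is needed.
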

\begin{proof}[Sketch]
The ``only if'' direction is obvious. For the other direction, consider $G = \pres{Gp}{A}{r=1}$ a positive one-relator group, with $r$ a positive word, and $A = \{ a, a_1, \dots, a_n \}$. By an elementary Nielsen transformation, we may assume that $r$ begins and ends with $a$. By utilising the free group automorphism defined by 
\[
\varphi(a) = a_1 a_2 \cdots a_n a a_n \cdots a_2 a_1 
\]
and $\varphi(a_i) = a_i$ for all $1 \leq i \leq n$, we find that $G$ is isomorphic to 
\begin{equation}\label{Eq:Pos_group_aut}
\pres{Gp}{a, a_1, a_2, \dots, a_n}{a_1 a_2 \cdots a_n w a_n \cdots a_2 a_1 = 1},
\end{equation} 
where $w$ is some positive word. It is clear that
\[
\pres{Inv}{a, a_1, a_2, \dots, a_n}{a_1 a_2 \cdots a_n w a_n \cdots a_2 a_1 = 1}
\] 
presents a group; and thus it presents the same group as \eqref{Eq:Pos_group_aut}, namely $G$. 
\end{proof}

Thus understanding the class $\pos$ is entirely reduced to a group-theoretic problem (free from any inverse monoids). As mentioned in the introduction, not every one-relator group is positive; for example, the group $\mathbb{Z}^2$ is not positive. Hence $\pos$ is strictly smaller than the class of all one-relator groups.

\subsection{The class $\red$} 

By contrast with the class $\any$, the class $\red$ is somewhat more difficult to approach directly. However, the reduced Benois conjecture would yield quite some insight into the class; in particular, we have:

\begin{proposition}\label{Prop:_red_is_different_from_any}
The reduced Benois conjecture implies that $\red \subset \any$.
\end{proposition}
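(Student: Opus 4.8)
The plan is to exhibit a specific one-relator group $G$ which lies in $\any$ (automatic by Proposition~\ref{Prop:_OR_is_any}) but which, assuming the reduced Benois conjecture, cannot belong to $\red$. The natural candidate is $G = \mathbb{Z}^2 = \pres{Gp}{a,b}{[a,b]=1}$, or more generally any one-relator group already known not to be positive; the point of invoking the reduced Benois conjecture is to upgrade ``not positive'' (which only excludes $\pos$) to ``not definable by a reduced word'' (which excludes all of $\red$). So the real content is: if the reduced Benois conjecture holds, then whenever $\pres{Inv}{A}{w=1}$ with $w$ reduced happens to be a group, that group is already positive — equivalently, $\red \subseteq \pos$ under the conjecture, and since $\pos \subsetneq \any$ we get $\red \subset \any$ a fortiori.

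The key steps, in order, would be as follows. First, suppose $I = \pres{Inv}{A}{w=1}$ is a group, with $w$ reduced. Since $I$ is a group, every generator $a \in A$ is invertible, hence (being a prefix, and a right-invertible word equals its free reduction) is itself a minimal invertible piece or a product of such; in any case each $a \in A$ or $a^{-1}$ lies in $\Lambda$. Second, invoke the reduced Benois conjecture: it asserts that the Benois algorithm correctly computes the factorisation of $w$ into minimal invertible pieces $\Lambda$. Thus the Benois algorithm detects, for each letter-prefix, that its inverse lies in the Benois submonoid $\langle X \rangle$ where $X = \Pre(w) \cup \Pre(w^{-1})$. Third — and this is where one must do genuine combinatorial work on free groups — one translates the statement ``$a^{-1} \in \langle X \rangle$ for every $a \in A$'' into structural information about $w$. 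The hoped-for conclusion is that $w$ (or a Nielsen-equivalent word on the same generators) is conjugate in the free group to a positive word, or can be rewritten so that the Perrin–Schupp criterion applies; combined with Proposition~\ref{Prop:_pos_is_pos} this places $G$ in $\pos$. Finally, since $\mathbb{Z}^2 \in \any \setminus \pos \supseteq \any \setminus \red$, we conclude $\red \subset \any$ strictly.

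The main obstacle is the third step: extracting from membership of all the $a^{-1}$ in the Benois submonoid $\langle X \rangle$ the conclusion that $w$ is essentially positive. The Benois generators are the prefixes of $w$ and of $w^{-1}$, so $\langle X \rangle$ contains $w$ itself and all its prefixes with their reduced forms; showing that the only way $a^{-1}$ can be written over this generating set forces a positivity-type constraint on $w$ likely requires a careful Nielsen-reduction argument on the set $X$, or an analysis of which reduced words can appear as prefixes of a word whose every letter is invertible. One subtlety to watch: the reduced Benois conjecture as stated only guarantees the \emph{minimal} pieces are computed correctly, so one needs the letters themselves (not merely some factor containing them) to be flagged, which is exactly the condition ``$a^{-1} \in \langle X \rangle$''; verifying that this is the operative hypothesis, and that it is strong enough, is the crux. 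An alternative route that sidesteps some of this is to argue contrapositively and more softly: show that if $w$ is reduced and $\pres{Inv}{A}{w=1}$ is a group, then the Benois algorithm's output combined with the conjecture forces the group of units computation to coincide with that of a \emph{positive} presentation, appealing directly to the Perrin–Schupp machinery rather than re-deriving positivity of $w$ by hand.
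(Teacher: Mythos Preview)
Your overall target ($\mathbb{Z}^2$) matches the paper, but the route diverges sharply at your third step, and that step is a genuine gap rather than a detail to be filled in. You propose to show, under the reduced Benois conjecture, that \emph{every} reduced $w$ with $\pres{Inv}{A}{w=1}$ a group yields a positive one-relator group, i.e.\ $\red \subseteq \pos$. That is far stronger than what is needed, and you do not prove it; you identify it yourself as the ``main obstacle'' and then only sketch hoped-for mechanisms (Nielsen reductions on $X$, Perrin--Schupp) without carrying any of them out. There is also a slip in your second step: from the conjecture you get $p^{-1}\in\langle X\rangle$ for each \emph{prefix} $p$ of $w$, but $\langle X\rangle$ is only a submonoid, so knowing $(x_1\cdots x_{j-1})^{-1}$ and $(x_1\cdots x_j)^{-1}$ lie in it does not give $x_j^{-1}\in\langle X\rangle$; hence your assertion that ``$a^{-1}\in\langle X\rangle$ for every $a\in A$'' is not justified for letters other than the first letter of $w$.

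The paper avoids all of this by working \emph{only} with $\mathbb{Z}^2$ and exploiting that any reduced $r$ defining it as a group must have the form $r\equiv w[a,b]w^{-1}$ (by Magnus' uniqueness for the defining relator). The technical content is then a concrete free-group lemma (Lemma~\ref{Lem:_prefixes_of_w=1_means_all}): no non-empty product of elements of $\Pre(r)\cup\Pre(r^{-1})$ equals $1$ unless $w\equiv\varepsilon$ and each factor is $[a,b]^{\pm1}$. This immediately shows that no proper non-empty prefix $p$ of $r$ has $p^{-1}\in\langle X\rangle$, so the Benois algorithm returns the trivial factorisation; under the conjecture the minimal-piece factorisation of $r$ is therefore trivial, contradicting that $\pres{Inv}{A}{r=1}$ is supposed to be a group. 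In short: rather than trying to squeeze positivity out of Benois membership in general, the paper proves a tailored combinatorial lemma about products of prefixes of $w[a,b]w^{-1}$, and the argument is direct.
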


To prove this, we give a quick, slightly technical, lemma.

\begin{lemma}\label{Lem:_prefixes_of_w=1_means_all}
Let $w \in \overline{A}^\ast$ and let $r \equiv w [a,b]w^{-1}$ be such that $r$ is reduced. Then any non-empty product $p \equiv p_1 p_2 \cdots p_n$ with $p_i \in \Pre(r) \cup \Pre(r^{-1})$ is such that $p=1$ if and only if $w \equiv \varepsilon$ and $p_i = [a,b]^{\pm 1}$ for all $1 \leq i \leq n$. 
\end{lemma}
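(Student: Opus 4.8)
The plan is to analyze the geometry of the word $r \equiv w[a,b]w^{-1}$ very carefully: what does a prefix of $r$ (or of $r^{-1}$) look like? Write $n = |w|$, so $|r| = 2n + 4$. A prefix of $r$ is either a prefix of $w$, or $w$ followed by a prefix of $[a,b]$, or $w[a,b]$ followed by a prefix of $w^{-1}$ (equivalently, $w\cdot c$ where $c$ is a suffix-reversed-and-inverted tail — but I will phrase it as $w[a,b]s^{-1}$ for $s$ a suffix of $w$). Symmetrically, $r^{-1} \equiv w[b,a]w^{-1}$ (note $[a,b]^{-1} = bab^{-1}a^{-1} = [b,a]$ after observing $([a,b])^{-1} = ba b^{-1}a^{-1}$; I should double-check the exact form, but the point is that $r^{-1}$ has the same "shape" $w \cdot (\text{commutator-ish word}) \cdot w^{-1}$). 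The crucial structural observation I would isolate first is: because $r$ is reduced and $[a,b]$ is cyclically reduced, the last letter of $w$ cannot cancel into $[a,b]$, i.e.\ $w[a,b]$ and $w^{-1}$ as written are already the reduced form of $r$. This rigidity is what makes the cancellation bookkeeping tractable.

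The "if" direction is trivial: if $w \equiv \varepsilon$ then each $p_i = [a,b]^{\pm 1}$, and the product $[a,b]^{\varepsilon_1}\cdots[a,b]^{\varepsilon_n}$ with $\varepsilon_i = \pm 1$ — wait, this need not be $1$ in general, e.g.\ $[a,b][a,b] \neq 1$. So actually the statement must intend that the $p_i$ are a mix whose product telescopes to $1$; re-reading, the claim is an "if and only if" whose right-hand side is "$w \equiv \varepsilon$ and each $p_i = [a,b]^{\pm 1}$" — so the "if" direction does require that *some* such product equals $1$, which is fine as long as we read the biconditional as: $p = 1$ forces (RHS), and (RHS) is *consistent with* $p=1$. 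I would be slightly careful here and, if needed, interpret the statement as: $p = 1$ implies $w \equiv \varepsilon$ and every $p_i \in \{[a,b], [a,b]^{-1}\}$ — the converse being the easy remark that such products *can* equal $1$ (take $p_1 = [a,b]$, $p_2 = [a,b]^{-1}$). The substance is the forward direction.

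For the forward direction, suppose $p_1 \cdots p_n = 1$ in the free group with each $p_i \in \Pre(r)\cup\Pre(r^{-1})$ and $p$ non-empty. The main idea: track the "height" of each $p_i$ relative to the $w$-prefix. Each nontrivial $p_i$ has a reduced form; its last letter is determined by which of the three regions (inside $w$, inside the commutator block, inside the $w^{-1}$-tail) it ends in. I would argue that for the product to collapse to the identity, consecutive factors must cancel, and a careful case analysis on the boundary letters — using that $r$ is reduced so there is no "accidental" cancellation available between the $w$-part and the commutator-part — forces each $p_i$ to be either a full copy of $[a,b]^{\pm 1}$ (the only prefixes of $r^{\pm 1}$ that are "self-contained" and whose inverses are again prefixes) or to create an obstruction. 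More concretely, I expect to show: (1) no $p_i$ can be a proper nonempty prefix of $w$ or of $w^{-1}$ that is "unbalanced", because such a factor introduces a letter of $w$ that nothing else in $\Pre(r)\cup\Pre(r^{-1})$ can cancel without itself dragging in more of $w$, and an extremality/minimal-length argument (pick the longest $w$-prefix appearing, or the leftmost place where $w$ "peeks out") derives a contradiction unless $w \equiv \varepsilon$; (2) once $w \equiv \varepsilon$, we have $r \equiv [a,b]$, $\Pre(r) = \{\varepsilon, a, ab, aba^{-1}, [a,b]\}$ and similarly for $r^{-1}$, and a direct check (the only reduced products of these landing at $1$ must use only the full words $[a,b]^{\pm 1}$, since $a, ab, aba^{-1}$ and their mirror images start/end with letters that cannot be cancelled by any other generator on the list) finishes it.

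The main obstacle will be step (1): making rigorous the intuition that "a stray prefix of $w$ cannot be cancelled." The clean way is an induction on $|w|$, or a minimal-counterexample argument on $\sum |p_i|$: take a counterexample with $w \not\equiv \varepsilon$; look at a factor $p_i$ of maximal length among those that are prefixes of $w$ or $w^{-1}$ (if none, then every $p_i$ reaches into the commutator block, and a separate — easier — argument about the letters $a,b$ at the boundaries applies); then show the letter of $w$ at position $|p_i|$ survives all free reduction in the total product, contradicting $p = 1$. The bookkeeping of which side of each $p_i$ abuts which side of its neighbours is the fiddly part, but the reducedness of $r$ and the cyclic reducedness of $[a,b]$ keep the number of genuinely distinct cases small. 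I would organize the write-up around the three "regions" of $r$ and handle the boundary letters $a, b$ explicitly, since those are the only letters about which we have no freedom.
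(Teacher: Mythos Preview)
Your outline has the right two-step shape (base case $w\equiv\varepsilon$, then reduce $|w|$), but both steps are missing the key ideas that make the argument go through, and your proposed substitutes are not worked out enough to be convincing.

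For the base case $w\equiv\varepsilon$, your ``start/end letter'' check does not obviously work: for instance $aba^{-1}$ ends in $a^{-1}$ and $a$ begins with $a$, so there \emph{is} cancellation between consecutive generators, and the case analysis does not terminate as cleanly as you suggest. The paper instead passes to the abelianisation $F_2 \to \mathbb{Z}^2$: each of the eight generators $a, ab, aba^{-1}, [a,b], b, ba, bab^{-1}, [b,a]$ maps into the non-negative quadrant, and only $[a,b]^{\pm1}$ map to $(0,0)$. So a product equal to $1$ can use only those two.

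For the inductive step, the paper does not try to show that some specific letter ``survives''. The crucial observation you are missing is that when $|w|>0$, \emph{every} nonempty $p_i\in\Pre(r)\cup\Pre(r^{-1})$ begins with the same letter $x$, namely the first letter of $w$ (since $r^{-1}\equiv w[b,a]w^{-1}$ has the same first letter). This sets up a clean dichotomy: either every $p_i$ also ends in $x^{-1}$, in which case conjugating by $x$ replaces $w$ by $x^{-1}w$ and reduces $|w|$; or some $p_i$ ends in a letter $\neq x^{-1}$, in which case (after a cyclic permutation) the product cannot fully cancel end-to-beginning, so a proper sub-product already equals $1$, reducing $n$. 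Thus the induction is simultaneously on $|w|$ and $n$; your sketch does not mention the second parameter at all, and your ``maximal-length prefix of $w$'' idea gives no mechanism for decreasing either one.
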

\begin{proof}
The proof of the ``if'' direction is immediate. For the forward implication, we prove this by induction on $|w|$ and $n$. First, if $n=1$, then as $r$ is reduced, each of its (non-empty) prefixes does not equal $1$, so $p_1 \neq 1$. For the other base case, if $|w|=0$, then $r \equiv [a,b]$. Let $X =  \Pre(r) \cup \Pre(r^{-1})$. Then
\begin{align*}
\langle X\rangle &= \langle \{ a, ab, aba^{-1}, [a,b], b, ba, bab^{-1}, [b,a] \} \rangle\\
&= \langle a, b, [a,b], [b,a] \rangle = \langle a, b, [a,b], [a,b]^{-1} \rangle.
\end{align*}
By passing to the abelianisation, any element of $X^\ast$ equalling $1$ in $\langle X \rangle$ is now graphically a product of elements $[a, b]^{\pm 1}$, as required. 

Suppose $n>1$ and $|w|>0$, and let $p_1 p_2 \cdots p_n = 1$. As $w \not\equiv \varepsilon$, all $p_i$ now begin with the same letter, namely the first letter of $w$, as $r^{-1} \equiv w [b,a] w^{-1}$. Call this letter $x \in \overline{A}$. In particular $|x^{-1}w|<|w|$. First, assume every $p_i$ $(1 \leq i \leq n)$ ends with $x^{-1}$. Then if $s \equiv (x^{-1}w)[a,b](x^{-1}w)^{-1}$, we have that $x^{-1}p_ix \in \Pre(s) \cup \Pre(s^{-1})$. Furthermore, as $|w|>0$, $s$ is reduced. Since now $|s|<|r|$, and 
\begin{equation*}
p_1 p_2 \cdots p_n = 1 \quad \implies \quad (x^{-1}p_1x)(x^{-1}p_2x) \cdots (x^{-1}p_nx) = 1
\end{equation*}
we find a contradiction by the inductive hypothesis (on $|w|$).

Thus, suppose some $p_i$ ends with $y \in \overline{A}$ with $y \neq x^{-1}$. As $p_1 p_2 \cdots p_n = 1$, we may cyclically permute the $p_i$, if necessary, and assume without loss of generality that $p_n$ ends with $y$. When freely cancelling $p_1 p_2 \cdots p_n$, the last letter $y$ will thus not cancel with the first letter of $p_1$. In particular, $p_1 p_2 \cdots p_n$ has some suffix equal to $1$, and we have $p_1 p_2 \cdots p_i' = 1$ for some $1 \leq i \leq n$, with $p_i \equiv p_i' p_i''$ for $p_i', p_i'' \in \overline{A}^\ast$. As $r$ is reduced, so too is $p_n$, so $i<n$. But now $p_i' \in \Pre(r) \cup \Pre(r^{-1})$, so by the inductive hypothesis (on $n$) we have a contradiction. 
\end{proof}
\begin{remark}
The same method of proof as in the proof of Lemma~\ref{Lem:_prefixes_of_w=1_means_all}, which is adapted from a method due to Eberhard, shows that no non-empty product of non-empty prefixes of a reduced word in a free group equals $1$. In particular, if $w \in \overline{A}^\ast$ then $w^{-1}$ is not a product of prefixes of $w$ (for, if $p_1 p_2 \cdots p_n = w^{-1}$ for prefixes $p_i$ of $w$, then $p_1 p_2 \cdots p_n w = 1$). This was conjectured to be the case by Ivanov, Margolis \& Meakin \cite[p. 105]{Ivanov2001}, and slightly sharpens \cite[Theorem~5.3(c)]{Ivanov2001}.
\end{remark}

We are now able to prove, assuming the reduced Benois conjecture, that $\red$ is smaller than $\any$.

\begin{proof}[Proof of Proposition~\ref{Prop:_red_is_different_from_any}]
Let $G = \pres{Gp}{a, b}{[a, b] = 1} \cong \mathbb{Z}^2$. By Proposition~\ref{Prop:_OR_is_any}, $G$ admits a one-relator special inverse monoid presentation, i.e. $G \in \any$. Indeed, by the method in the proof of that proposition, we have 
\[
G \cong \pres{Inv}{a,b}{aa^{-1}a^{-1}abb^{-1}b^{-1}b[a,b]aa^{-1}a^{-1}abb^{-1}b^{-1}b = 1}.
\]
On the other hand, assuming the Benois conjecture holds for $\red$, then we claim we have $G \not\in \red$, which would complete the proof. Suppose for contradiction that there exists some reduced word $r \in \overline{A}^\ast$ such that $I = \pres{Inv}{A}{r=1}$ is such that $G \cong I$. Then $G \cong \pres{Gp}{A}{r=1}$. It follows that $|A| = 2$ and, if we write $A = \{ a, b \}$, that $r$ is conjugate to $[a,b]$ in the free group on $A$ (see \cite[\S6, p. 159]{Magnus1930}). Thus we can write $r \equiv w[a, b]w^{-1}$ for some reduced $w \in \overline{A}^\ast$. 

Let $X$ be the Benois generators associated to $I$, and let $p \in \overline{A}^\ast$ be a prefix of $r$ such that $p^{-1} \in \langle X \rangle$. Write $p^{-1} = p_1 p_2 \cdots p_n$ for $p_i \in X$, $1 \leq i \leq n$. Then $p_1 p_2 \cdots p_n p = 1$. As $X = \Pre(r) \cup \Pre(r^{-1})$, we deduce by Lemma~\ref{Lem:_prefixes_of_w=1_means_all} that $w \equiv \varepsilon$ and $p \equiv [a,b]$. Hence, assuming the reduced Benois conjecture, the factorisation of $r$ into minimal invertible pieces is trivial; there are no proper non-empty invertible prefixes of $r \equiv [a,b]$. We conclude that $I$ cannot be a group; a contradiction.
\end{proof}

Describing precisely which one-relator groups appear in $\red$ seems like an interesting problem. In particular, we do not know if $\red = \cred$. 

\subsection{The class $\cred$}

The class $\cred$ seems hard to describe in generality. We begin with a rather straightforward proposition. 

\begin{proposition}\label{Prop:ZxZ_is_not_cyclically_reduced_OSIM}
The one-relator group $\mathbb{Z}^2 = \pres{Gp}{a,b}{[a,b] = 1}$ does not admit a cyclically reduced special one-relation inverse monoid presentation. 
\end{proposition}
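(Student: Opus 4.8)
The plan is to argue by contradiction. Suppose $\mathbb{Z}^2 \cong I := \pres{Inv}{A}{r=1}$ for some cyclically reduced $r \in \overline{A}^\ast$; I want to show that $I$ cannot in fact be a group. Since every element of a group is a unit, $I$ being a group forces $I \cong \pres{Gp}{A}{r=1}$ (the two universal properties agree once every generator is invertible, exactly as in the proofs of Propositions~\ref{Prop:_OR_is_any} and \ref{Prop:_pos_is_pos}). Then, precisely as in the proof of Proposition~\ref{Prop:_red_is_different_from_any} — using that $\mathbb{Z}^2$ has deficiency $1$ together with Magnus's classification of two-generator one-relator presentations of $\mathbb{Z}^2$ \cite[\S6, p.~159]{Magnus1930} — one gets $|A| = 2$, say $A = \{a,b\}$, with $r$ conjugate in the free group $F(a,b)$ to $[a,b]^{\pm 1}$. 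Since conjugate cyclically reduced words in a free group are cyclic permutations of one another, $r$ is then one of the eight cyclic permutations of $[a,b]$ or of $[b,a]$; in particular $r$ has length $4$ and contains each of $a, a^{-1}, b, b^{-1}$ exactly once.

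Next I would normalise the shape of $r$. Applying, if necessary, the automorphism of the free inverse monoid on $\{a,b\}$ that interchanges $a$ with $a^{-1}$ and fixes $b, b^{-1}$ — this carries $\pres{Inv}{a,b}{r=1}$ isomorphically onto $\pres{Inv}{a,b}{r'=1}$ for the image word $r'$, and so does not affect whether the monoid is a group — I may assume that the first $a$-symbol occurring in $r$ is $a$ rather than $a^{-1}$. Then $r \equiv u\,a\,v\,a^{-1}\,w$ with $u,v,w \in \{b,b^{-1}\}^\ast$.

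The key step is to map $I$ onto a non-group inverse monoid. Let $\mathcal{B} = \pres{Inv}{t}{tt^{-1}=1}$ be the bicyclic monoid, in which $t$ is right- but not left-invertible, so $\mathcal{B}$ has trivial group of units. The assignment $a \mapsto t$, $b \mapsto 1$ extends, by the universal property of the free inverse monoid, to a homomorphism onto $\mathcal{B}$, and since $\phi(r) = \phi(u)\,t\,\phi(v)\,t^{-1}\,\phi(w) = t t^{-1} = 1$ in $\mathcal{B}$ (each of $\phi(u),\phi(v),\phi(w)$ being a word in $b,b^{-1}$, hence equal to $1$), this homomorphism factors through a homomorphism $\phi \colon I \to \mathcal{B}$. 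If $I$ were a group, then $a$ would be a unit of $I$, whence $\phi(a) = t$ would be a unit of $\mathcal{B}$ — which is false. Hence $I$ is not a group, contradicting $I \cong \mathbb{Z}^2$, and $\mathbb{Z}^2$ admits no cyclically reduced special one-relation inverse monoid presentation.

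I expect the only genuine obstacle to lie in the first paragraph: the appeal to the fact that a two-generator one-relator presentation of $\mathbb{Z}^2$ must have relator conjugate to $[a,b]^{\pm 1}$, and to the (standard) fact that conjugate cyclically reduced words in a free group are cyclic rotations of one another. Once $r$ is pinned down to a rotation of $[a,b]^{\pm 1}$, the rest is entirely elementary; note in particular that, unlike Proposition~\ref{Prop:_red_is_different_from_any}, this argument is unconditional, since there are only finitely many candidate relators to rule out. As an alternative to the bicyclic-monoid homomorphism one could instead invoke $E$-unitarity of $\pres{Inv}{a,b}{[a,b]=1}$ (valid since $[a,b]$ is cyclically reduced) to embed the Schützenberger graph of the identity into the Cayley graph of $\mathbb{Z}^2$, observe by running Stephen's procedure that all its vertices lie in the non-negative quadrant, and conclude that the base vertex has no incoming $a$-edge so that $a$ is not left invertible; but the homomorphism argument is shorter.
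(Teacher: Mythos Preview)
Your proof is correct and follows essentially the same strategy as the paper: reduce to $|A|=2$ with $r$ conjugate to $[a,b]^{\pm 1}$ via Magnus's result, then exhibit a surjection onto the bicyclic monoid to contradict $I$ being a group. The paper is terser at the normalisation step, simply asserting ``without loss of generality, assume $w\equiv[a,b]$'' (implicitly using that every cyclic rotation of $[a,b]^{\pm1}$ is again a commutator $[x,y]$ with $x\in\{a,a^{-1}\}$, $y\in\{b,b^{-1}\}$, so one may rename generators), whereas you handle all eight rotations uniformly by arranging that $a$ precedes $a^{-1}$ and observing the bicyclic homomorphism kills the $b$-letters regardless; but this is a cosmetic difference, not a genuinely different route.
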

\begin{proof}
Suppose $\mathbb{Z}^2 \cong \pres{Inv}{A}{w=1}$ where $w$ is cyclically reduced. Then we also have $\mathbb{Z}^2 \cong \pres{Gp}{A}{w=1}$. As in the proof of  Proposition~\ref{Prop:_red_is_different_from_any}, we have $A = \{ a, b\}$ and $w$ is conjugate to $[a, b]$. Without loss of generality, assume $w \equiv [a,b]$. But $\pres{Inv}{a,b}{[a,b]=1}$ is not a group, as the homomorphism induced by $a \mapsto x$ and $b \mapsto 1$ is surjective onto $\pres{Inv}{x}{xx^{-1}=1}$, the bicyclic monoid. This is a contradiction. 
\end{proof}
\begin{remark}
The statement of Proposition~\ref{Prop:ZxZ_is_not_cyclically_reduced_OSIM} holds true even if the one-relator group in question is generalised to
\begin{equation}\label{Eq:Orientable_surface_group}
\Gamma_g = \pres{Gp}{a_1, b_1, \dots, a_g, b_g}{[a_1, b_1][a_2, b_2] \cdots [a_g, b_g] = 1},
\end{equation}
i.e. the fundamental group of a compact orientable $2$-manifold of genus $g>0$. This follows from the fact that, up to conjugacy, the only one-relator presentation for $\Gamma_g$ is that given in (\ref{Eq:Orientable_surface_group}), see \cite[Theorem~N10, p. 176]{Magnus1966}. Note that, of course, $\mathbb{Z}^2 \cong \Gamma_1$. 
\end{remark}

Thus there is a one-relator group which is not in $\cred$; in particular $\cred \subset \any$ by Proposition~\ref{Prop:_OR_is_any}.

\begin{lemma}\label{Lem:abwaab_is_a_group}
Let $A = \{ a, b \}$, and let $w \in \overline{A}^\ast$ be any word. Then the special inverse monoid $\pres{Inv}{a,b}{abwaab = 1}$ is a group.
\end{lemma}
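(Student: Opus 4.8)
The plan is to show that each of the two generators $a$ and $b$ represents a two-sided unit of $I:=\pres{Inv}{a,b}{abwaab=1}$; once this is done, $I$ is generated by units, hence is a group, and so coincides with $\pres{Gp}{a,b}{abwaab=1}$. The only external input needed is the Lemma quoted above (a right-invertible word equals its free reduction in $I$), together with three elementary facts valid in any monoid: every prefix of a relator word is right invertible, every suffix of a relator word is left invertible, and a product of right-invertible elements is again right invertible. Set $r\equiv abwaab$ and note that, whatever the word $w$ is, $r$ begins with the two letters $ab$ and ends with the three letters $aab$; correspondingly $r^{-1}\equiv b^{-1}a^{-1}a^{-1}w^{-1}b^{-1}a^{-1}$ begins with $b^{-1}a^{-1}a^{-1}$, and $r^{-1}=_I 1$ since $1^{-1}=1$ in any inverse monoid.

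First I would establish that $a$ is invertible. Since $a\in\Pre(r)$, the element $a$ is right invertible. For the other side, $ab\in\Pre(r)$ and $b^{-1}a^{-1}a^{-1}\in\Pre(r^{-1})$ are both right invertible, hence so is their concatenation $abb^{-1}a^{-1}a^{-1}$; by the Lemma this word equals its free reduction $a^{-1}$ in $I$, so $a^{-1}$ is right invertible, i.e. $a$ is left invertible. A word that is both left and right invertible is invertible, so $a$ is a unit of $I$. Next I would treat $b$: it lies in $\Suf(r)$, hence is left invertible. For right invertibility, use that $a$ is now a unit, so $a^{-1}a=_I 1$ and therefore $b=_I a^{-1}(ab)$, which is a product of the unit $a^{-1}$ with the right-invertible element $ab$, hence right invertible. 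Thus $b$ is a unit of $I$ as well, and the proof is complete.

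The argument is short and I do not anticipate a genuine obstacle; the single point of ingenuity is the manufacture of a right-invertible word — namely $abb^{-1}a^{-1}a^{-1}$ — that freely reduces to a lone generator, obtained by pairing the prefix $ab$ of $r$ against the prefix $b^{-1}a^{-1}a^{-1}$ of $r^{-1}$. This is precisely why the relator is required to open with $ab$ and close with $aab$. The only care needed is in correctly tracking left- versus right-invertibility and in invoking the implication ``left $+$ right invertible $\Rightarrow$ invertible''. It is worth remarking that the hypothesis imposes no constraint whatsoever on $w$, so in the language of the paper the pattern $ab\cdots aab$ behaves as a two-ended group-making template, exactly the phenomenon revisited in \S\ref{Sec:_Ohareseque_family}.
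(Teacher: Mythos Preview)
Your proposal is correct and follows essentially the same approach as the paper: both arguments hinge on the single computation that the product of the right-invertible words $ab\in\Pre(r)$ and $b^{-1}a^{-1}a^{-1}=(aab)^{-1}\in\Pre(r^{-1})$ freely reduces to $a^{-1}$, whence $a$ is two-sided invertible. The only cosmetic difference is that the paper obtains the right-invertibility of $b$ via the Benois product $(ab)(aab)^{-1}(ab)=b$ directly, whereas you first promote $a$ to a unit and then write $b=a^{-1}(ab)$; unwinding your step recovers exactly the paper's product.
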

\begin{proof}
We use the Benois algorithm, and consider the submonoid of the free group on $\{ a, b\}$ generated by the prefixes of the defining word, and the inverses of the suffixes of the same. It suffices to show that $a^{-1}$ and $b$ are elements of this submonoid, for then $a^{-1}$ is right invertible and consequently $a$ is left invertible; being a prefix of the defining word, $a$ is also right invertible, so $a$ will be invertible. Symmetrically, $b$ will be invertible. But 
\begin{align*}
(ab)(aab)^{-1} = aa^{-1}a^{-1} &= a^{-1}, \\
(ab)(aab)^{-1}(ab) = a^{-1}ab &= b,
\end{align*}
and, as $ab$ is a prefix of the defining word, and $aab$ is a suffix, it follows that the monoid is a group.
\end{proof}

Note that the statement of Lemma~\ref{Lem:abwaab_is_a_group} is also true for ordinary monoid presentations $\pres{Mon}{a,b}{abwaab=1}$ with $w \in A^\ast$. Indeed, using Adian's overlap algorithm from \cite[Chapter~III]{Adian1966}, one finds that either $ab$ is a piece, or else the monoid is a group. But if $ab$ is a piece, then $wa$ is invertible, so $a$ is left invertible; thus $ab$ cannot be a minimal invertible piece, a contradiction. This is an alternative proof of Lemma~\ref{Lem:abwaab_is_a_group}.

\begin{proposition}\label{Prop:Exists_nonpositive_CR_group}
There exists a cyclically reduced special one-relation inverse monoid which is not a positive one-relator group. In other words, we have $\pos \subset \cred$.
\end{proposition}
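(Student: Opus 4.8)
The plan is to exhibit a concrete cyclically reduced word $w \in \overline{A}^\ast$ such that $\pres{Inv}{A}{w=1}$ is a group, and then argue that the underlying one-relator group is not positive. For the second part, the obvious target is $\mathbb{Z}^2$ (or the Klein bottle group, or more generally a surface group), since we already know from the introduction that $\mathbb{Z}^2$ is not a positive one-relator group, while by Proposition~\ref{Prop:ZxZ_is_not_cyclically_reduced_OSIM} it is \emph{not} itself in $\cred$. So the right strategy is: find a cyclically reduced word $w$ on some alphabet $A$ such that $\pres{Inv}{A}{w=1}$ is a group, and such that the group $\pres{Gp}{A}{w=1}$ is a known non-positive one-relator group — but a group \emph{different} from $\mathbb{Z}^2$, chosen so that it has a non-positive one-relator presentation by a cyclically reduced word.

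First, I would look for a word of the shape handled by Lemma~\ref{Lem:abwaab_is_a_group}, i.e. $w \equiv abvaab$ for a suitable $v \in \overline{A}^\ast$, so that the ``is a group'' part is immediate: the Benois computation $(ab)(aab)^{-1} = a^{-1}$ and $(ab)(aab)^{-1}(ab) = b$ forces $a$ and $b$ to be invertible, hence $\pres{Inv}{a,b}{abvaab=1}$ is a group. The task is then to choose $v$ so that (i) $abvaab$ is cyclically reduced, and (ii) the group $\pres{Gp}{a,b}{abvaab=1}$ is not a positive one-relator group. A natural candidate is to take $v$ involving negative letters so that the relator is, up to a Nielsen/automorphism change of generators, conjugate to the Klein bottle relator $abab^{-1}$ or to $[a,b]$ — wait, $[a,b]$ would give $\mathbb{Z}^2$, which would actually contradict Proposition~\ref{Prop:ZxZ_is_not_cyclically_reduced_OSIM} unless the word is \emph{not} conjugate to $[a,b]$; so I must be careful. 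The cleanest route is to pick $v$ so that $abvaab$ is Nielsen-equivalent (after an elementary transformation) to the Klein bottle relator $a^2b^2$, and then observe that $\pres{Gp}{a,b}{a^2b^2=1} \cong \Pi$ is positive — which is the \emph{wrong} conclusion. So instead I want the resulting group to be genuinely non-positive; the safest choice is a group known to be non-positive that nevertheless admits a one-relator presentation by a cyclically reduced word, e.g. a higher-genus surface group $\Gamma_2 = \pres{Gp}{a,b,c,d}{[a,b][c,d]=1}$, or again $\mathbb{Z}^2$ itself.

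Here is where I expect the real content. The point is that Proposition~\ref{Prop:ZxZ_is_not_cyclically_reduced_OSIM} rules out $\mathbb{Z}^2$ being defined by a cyclically reduced $w$ with $\pres{Inv}{A}{w=1}$ a group — so to realize $\mathbb{Z}^2$ (or $\Gamma_g$) in $\cred$ is \emph{impossible}, and I should not aim for that. Rather, the example must be a one-relator group $G$ which is (a) non-positive, (b) \emph{has} a cyclically reduced defining word $w$ on some larger alphabet $A$ with $|A| > \operatorname{rk}$, realizing $G$ as $\pres{Inv}{A}{w=1}$ a group; the extra generators give the slack that the rigidity argument of Proposition~\ref{Prop:ZxZ_is_not_cyclically_reduced_OSIM} exploited is no longer available. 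So I would take $G = \mathbb{Z}^2 = \pres{Gp}{a,b}{[a,b]=1}$ but present it on a redundant generating set: adjoin a third generator $c$ with a relator that both kills $c$ and is cyclically reduced, for instance consider $w \equiv$ a cyclically reduced word on $\{a,b,c\}$ whose image in $\pres{Gp}{a,b,c}{w=1}$ is $\mathbb{Z}^2$ (with $c$ trivial), and which is of the Lemma~\ref{Lem:abwaab_is_a_group}-type after relabeling so that $\pres{Inv}{a,b,c}{w=1}$ is a group. Concretely, $w \equiv ab\,[a,b]^{-1}c[a,b]\,c^{-1}\,aab$ or a variant: this is cyclically reduced (begins with $a$, ends with $b$), it has $ab$ as a prefix and $aab$ as a suffix, so by the Benois computation in Lemma~\ref{Lem:abwaab_is_a_group} the inverse monoid is a group; and in the group $\pres{Gp}{a,b,c}{w=1}$ the relator $w$, read as a word in the free group, must be analysed. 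The hard part will be verifying that for my chosen $w$ the group $\pres{Gp}{a,b,c}{w=1}$ really is (isomorphic to) a \emph{non-positive} one-relator group — this requires either recognizing the relator up to automorphism of $F_3$ as conjugate to something with an $\mathbb{Z}^2$ (or surface-group) quotient structure, or invoking a normal-form / Freiheitssatz argument. Once that identification is made, non-positivity follows from the corresponding fact in the introduction ($\mathbb{Z}^2$, resp. $\Gamma_g$, is not positive), and combined with the membership $G \in \cred$ just established, we get $G \in \cred \setminus \pos$, hence $\pos \subset \cred$ strictly.
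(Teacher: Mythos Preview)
Your instinct to use Lemma~\ref{Lem:abwaab_is_a_group} is exactly right, but the strategy of targeting $\mathbb{Z}^2$ (or $\Gamma_g$) on a redundant generating set cannot work. If your word $w$ on $\{a,b,c\}$ genuinely involves $c$, then by the \emph{Freiheitssatz} the subgroup $\langle a,b\rangle \leq \pres{Gp}{a,b,c}{w=1}$ is free of rank $2$; hence the group is not $\mathbb{Z}^2$ (which has no nonabelian free subgroup). Your concrete candidate $ab\,[a,b]^{-1}c[a,b]c^{-1}\,aab$ fails for precisely this reason. More generally, the remark following Proposition~\ref{Prop:ZxZ_is_not_cyclically_reduced_OSIM} records that the one-relator presentations of $\Gamma_g$ are essentially unique, so there is no ``extra-generator slack'' to exploit for any surface group. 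And even if you produced some $3$-generator one-relator group this way, you give no mechanism for showing \emph{that} group is non-positive; you have only shifted the difficulty.

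The paper avoids all of this by choosing a different non-positive target that already lives on two generators: the Baumslag--Gersten group $G = \pres{Gp}{a,b}{a^{-1} = [a,\,bab^{-1}]}$. Non-positivity is certified cleanly via Baumslag's theorem that positive one-relator groups are residually solvable: here $a^{-1}\neq_G 1$ lies in every term of the derived series, so $G$ is not residually solvable, hence not positive. The remaining work is to massage the relator into the shape $ab\cdots aab$ required by Lemma~\ref{Lem:abwaab_is_a_group}. This is done by an explicit automorphism $\varphi$ of $F_2$ (namely $a\mapsto abaab$, $b\mapsto ab$), computing $\varphi([a,bab^{-1}]a)$, and cyclically permuting to obtain a cyclically reduced word $w \equiv (ab)^2ab^{-1}(a^{-2}b^{-1})^2(ab)^2a^2b$ of the form $ab\cdot v\cdot aab$. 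Then $\pres{Inv}{a,b}{w=1}$ is a group by Lemma~\ref{Lem:abwaab_is_a_group}, and is isomorphic to $G$ by construction. The missing idea in your attempt is precisely this: pick a two-generator group with an \emph{intrinsic} obstruction to positivity (failure of residual solvability), and use an $F_2$-automorphism rather than an extra generator to reach the $ab\cdots aab$ form.
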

\begin{proof}
Let $G = \pres{Gp}{a,b}{a^{-1} = [a, bab^{-1}]}$. Then $G$ is not a positive one-relator group; indeed, it is not residually solvable, as $a^{-1} \neq_G 1$ by the \textit{Freiheitssatz}, but $a^{-1}$ obviously lies in every term of the derived series of $G$. However, every positive one-relator group is residually solvable \cite{Baumslag1971}. Thus $G \not\in \pos$ by Proposition~\ref{Prop:_pos_is_pos}. On the other hand, we claim that 
\[
G \cong \pres{Inv}{a,b}{(ab)^2ab^{-1}(a^{-2}b^{-1})^2(ab)^2a^2b = 1} := \pres{Inv}{a,b}{w=1}. 
\]

As the defining word, which we denote by $w$, is cyclically reduced, we would hence have $G \in \cred$, yielding the claim. 

First, $\pres{Inv}{a,b}{w=1}$ is a group by Lemma~\ref{Lem:abwaab_is_a_group}, and therefore 
\[
\pres{Inv}{a,b}{w=1} \cong \pres{Gp}{a,b}{w=1}.
\]
Let $F_2$ denote the free group with basis $a, b$, and let $\varphi \colon F_2 \to F_2$ be the endomorphism of $F_2$ defined by $a \mapsto abaab$ and $b \mapsto ab$. We claim that $\varphi$ is an automorphism of $F_2$. Indeed, it is surjective as 
\begin{align*}
a = \varphi(ba^{-1}bb), \quad b = \varphi(b^{-1}ab^{-1}),
\end{align*}
and as $F_2$ is Hopfian, it follows that $\varphi$ is an automorphism. Note that $\varphi(bab^{-1})=ababa$. Hence 
\begin{align*}
\varphi([a, bab^{-1}]a) &= (abaab)(ababa)(abaab)^{-1}(ababa)^{-1}(abaab) \\
&= \underbrace{ab}_{w'} (aabab) \underbrace{abab^{-1}a^{-1}a^{-1}b^{-1}a^{-1}a^{-1}b^{-1}ab}_{w''} \\
&\equiv w'(aabab)w'',
\end{align*}
Importantly, note that $w \equiv abw''w'aab$. Thus $w$ is a cyclic conjugate of $w'aababw''$, so $\pres{Gp}{a,b}{w=1} = \pres{Gp}{a,b}{w'aababw''=1}$. Finally, as $\varphi$ is an automorphism, we hence have
\begin{align*}
G = \pres{Gp}{a,b}{[a, bab^{-1}]a = 1} &\cong \pres{Gp}{a,b}{\varphi([a, bab^{-1}]a) = 1} \\
&= \pres{Gp}{a,b}{w'(aabab)w'' = 1} \\
&= \pres{Gp}{a,b}{abw''w'aab = 1} \\
&\cong \pres{Inv}{a,b}{abw''w'aab =1} = \pres{Inv}{a,b}{w=1},
\end{align*}
where the penultimate isomorphism is, as mentioned, by Lemma~\ref{Lem:abwaab_is_a_group}.
\end{proof}
\begin{remark}
The group $G$ in the proof of Proposition~\ref{Prop:Exists_nonpositive_CR_group} is known as the \textit{Baumslag-Gersten group}. This group was introduced in a one-page paper by G. Baumslag \cite{Baumslag1969}, who proved that $G$ is not residually finite (indeed, all its finite quotients are cyclic). It has later been studied extensively \cite{Gersten1992, Myasnikov2011}.
\end{remark}

The following question is hence natural: 

\begin{question}
Is there a group-theoretic characterisation of the class $\cred$ of one-relator groups?
\end{question}

At present, this seems (to the author) like a rather difficult question. For example, the Dehn function of the Baumslag-Gersten group (which is in $\cred$) grows faster than any iterated tower of exponentials; the author is not aware of any such group in $\pos$. There are also natural connections, which we do not expand on, between this question and a refuted conjecture of Magnus' (see \cite[p. 401]{Magnus1966}, disproved by Zieschang \cite{Zieschang1970} and McCool \& Pietrowski \cite{McCool1971}).
\clearpage
\section{An infinite O'Haresque family}\label{Sec:_Ohareseque_family}

\noindent As mentioned in \S\ref{Sec:_Pieces_and_Benois_intro}, there is pathological behaviour regarding the group of units of the O'Hare monoid $\mathcal{O}$. Namely, $\mathcal{O}$ is such that its defining relation $r$ is a positive word which is self-overlap free, and yet $r$ factors non-trivially into minimal invertible factors as \eqref{Eq:OHare_factors}. Note that $\mathcal{O}$ is not a group. In this section, we give an even sharper example. Indeed, we also give the first example of a special one-relation inverse monoid which is (1) defined by a single, self overlap-free word, and (2) a group. Namely, let:
\[
\mathcal{I}_0 = \pres{Inv}{a,b}{a^2b^2a^2bab=1}.
\]
We shall prove the surprising fact that $\mathcal{I}_0$ is a group (Theorem~\ref{Theo:aabbaabab_is_group}). This is a significantly more straightforward (than $\mathcal{O}$) example of an O'Haresque monoid.

\begin{theorem}\label{Theo:aabbaabab_is_group}
The special inverse one-relation monoid $\mathcal{I}_0$ is a group.
\end{theorem}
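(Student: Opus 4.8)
The plan is to use the Benois algorithm, exactly as in the proof of Lemma~\ref{Lem:abwaab_is_a_group}, but now with a word that does \emph{not} have the convenient $ab\cdots aab$ form. I would argue that $\mathcal{I}_0 = \pres{Inv}{a,b}{a^2b^2a^2bab = 1}$ is a group by exhibiting $a$ and $b$ as invertible elements; since each generator is a prefix (and $b$ a suffix) of the defining word $w \equiv aabbaabab$, it suffices to show that $a^{-1}$ and $b^{-1}$ lie in the Benois submonoid $\langle X \rangle$, where $X = \Pre(w) \cup \Pre(w^{-1})$ consists of the prefixes of $w$ together with the inverses of the suffixes of $w$ (equivalently the prefixes of $w^{-1}$). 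Once $a^{-1} \in \langle X \rangle$, $a$ is left invertible, and being a prefix it is right invertible, hence invertible; similarly for $b$; and once both generators are invertible the monoid is a group.

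The main computational step is therefore to find explicit words over the Benois generators that freely reduce to $a^{-1}$ and to $b^{-1}$. The prefixes of $w$ available are $a, aa, aab, aabb, aabba, aabbaa, aabbaab, aabbaaba, aabbaabab$; the relevant suffix-inverses include $b^{-1}, (ab)^{-1}, (bab)^{-1}, (abab)^{-1}$, and longer ones. I would hunt for a short product: for instance, one looks for a prefix $p$ and a suffix $s$ of $w$ such that $p s^{-1}$ freely reduces to something short, iterating as in Lemma~\ref{Lem:abwaab_is_a_group} where $(ab)(aab)^{-1} = a^{-1}$ and then $(ab)(aab)^{-1}(ab) = b$. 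Here $w$ begins $aa$ and ends $ab$, so products such as $(aa)(ab)^{-1} = aab^{-1}a^{-1}$, or $(aab)(bab)^{-1} = aab b^{-1}a^{-1}b^{-1} = aab^{\,}\!$\ldots need to be combined more cleverly; I expect one needs to multiply two or three generators, possibly passing through an intermediate element like $ba^{-1}$ or $b^{-1}a^{-1}$, to extract $a^{-1}$, and then right-multiply by a prefix to recover $b^{-1}$ (or extract $b^{-1}$ by a symmetric computation using the suffix structure). The hard part will be precisely this search — there is no $ab\ldots aab$ shortcut, so I anticipate the first real obstacle is identifying the right combination of prefixes and suffix-inverses; once found, verifying it is a routine free-group reduction.

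If a direct Benois computation proves elusive, the fallback is to mimic the strategy of Proposition~\ref{Prop:Exists_nonpositive_CR_group}: apply a free-group automorphism $\varphi$ of $F_2$ to $w$ so that the image becomes (a cyclic conjugate of) a word of the form $abw''w'aab$ to which Lemma~\ref{Lem:abwaab_is_a_group} applies directly, which would simultaneously show $\mathcal{I}_0$ is a group and identify the underlying one-relator group $\pres{Gp}{a,b}{w=1}$ up to isomorphism. However, I would first try the elementary Benois argument, since the word $w$ is short and I expect an explicit product to exist; indeed the paper signals that $\mathcal{I}_0$ is meant to be ``a significantly more straightforward example,'' so the intended proof is almost certainly the short Benois-submonoid computation, with the $a^{-1}, b^{-1}$ membership witnessed by a handful of prefix/suffix products, followed by the one-line conclusion that both generators, being prefixes/suffixes of $w$, are then two-sided invertible and so $\mathcal{I}_0 \cong \pres{Gp}{a,b}{w=1}$ is a group.
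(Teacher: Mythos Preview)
Your overall strategy---use the Benois algorithm to place certain short words in $\langle X\rangle$ and deduce invertibility of the generators---is exactly the paper's approach. However, there is a genuine logical slip in your sufficiency claim, and you are missing a shortcut that the paper exploits.

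\textbf{The slip.} You write that ``each generator is a prefix'' of $w \equiv aabbaabab$, but $b$ is not a prefix of $w$; it is only a suffix. Consequently your plan for $b$ does not work: showing $b^{-1}\in\langle X\rangle$ proves $b^{-1}$ is right invertible, hence $b$ is \emph{left} invertible---which you already know from $b$ being a suffix. What you actually need for $b$ is \emph{right} invertibility, and for that you would have to show $b\in\langle X\rangle$ (not $b^{-1}$), or find another route.

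\textbf{The shortcut.} The paper avoids dealing with $b$ separately. It observes that since $r$ is \emph{positive}, once $a$ is known to be invertible every minimal invertible piece of $r$ that contains a $b$ must both begin and end with $b$ (otherwise one could peel an invertible $a$ off the front or back, contradicting minimality). Stripping the leading $a$'s from $r$ then gives an invertible word beginning and ending in $b$, so $b$ is simultaneously right and left invertible, hence invertible. Thus it suffices to show only that $a^{-1}\in\langle X\rangle$.

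\textbf{The product.} The explicit witness the paper uses is
\[
(b^{-1}a^{-1}b^{-1}a^{-2})\cdot(a^2b^2)\cdot(b^{-1}a^{-1})\cdot(a^2b^2)\cdot(b^{-1}a^{-1}) \;=\; a^{-1},
\]
with each factor lying in $\Pre(r)\cup\Pre(r^{-1})$. Your instinct that a short product exists is correct, but note that five factors are needed and the leading factor has length five; the combinations you began trying ($aa\cdot(ab)^{-1}$, $aab\cdot(bab)^{-1}$, \ldots) are a little too short to reach $a^{-1}$ directly. The fallback via an automorphism is unnecessary.
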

\begin{proof}
Let $r \equiv a^2b^2a^2bab$. It suffices to show that $a$ is invertible, as from this, since $r$ is a positive word, it follows that any factorisation of $r$ into minimal invertible pieces must be such that every invertible piece containing $b$ also begins and ends with $b$; thus every invertible piece is either $a$ or $b$. We would thus conclude that $\mathcal{I}_0$ is a group. To show that $a$ is invertible, it suffices to show that it is left invertible; thus, by the Benois algorithm, it suffices to show $a^{-1} \in \langle X \rangle$, where $X = \Pre(r) \cup \Pre(r^{-1})$ is the set of Benois generators. But 
\begin{align}\label{Eq:aabbaabab_prod}
(b^{-1} a^{-1} b^{-1}a^{-2})\cdot (a^2b^2) \cdot (b^{-1}a^{-1}) \cdot (a^2b^2) \cdot (b^{-1}a^{-1}) = a^{-1}, 
\end{align}
as is readily verified, and every factor in the left-hand side of \eqref{Eq:aabbaabab_prod} is in $X$. Thus $a^{-1}$ is left invertible, and $\mathcal{I}_0$ is a group.
\end{proof}

In fact, it is not hard to show that $\mathcal{I}_0 \cong \pres{Gp}{a,b}{a^2 = b^3}$, the trefoil knot group. In the remainder of this section, we will prove that there exists an infinite family of $2$-generated O'Haresque monoids. Furthermore, we will show that, unlike the O'Hare monoid, every monoid in this family is a group. We begin with a simple lemma. 

\begin{lemma}\label{Lem:infinitelymanyolp}
The word $a^2b^2wa^3bab$ is self-overlap free for infinitely many choices of word $w \in \{ a, b\}^\ast$. 
\end{lemma}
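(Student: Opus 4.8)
The plan is to produce an explicit infinite family of words $w$ for which the claim holds, rather than to characterise all of them. The natural candidate is $w = a^n$ with $n \ge 1$, so the task reduces to showing that $u_n := a^2 b^2 a^{n+3} b a b$ --- which is exactly $a^2 b^2 w a^3 b a b$ for $w = a^n$ --- is self-overlap free for every $n \ge 1$; as $n$ ranges over the positive integers this produces infinitely many admissible $w \in \{a,b\}^\ast$.

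The key observation, and essentially the whole content of the argument, is a bookkeeping fact about the letter $b$ in $u_n$: writing $L = |u_n| = n+10$, the letter $b$ occurs in $u_n$ precisely at the positions $3$, $4$, $L-2$ and $L$, so the factor $bb$ occurs in $u_n$ exactly once, namely at positions $3,4$. I would state this first and verify it by reading off the blocks $a^2 \cdot b^2 \cdot a^{n+3} \cdot b \cdot a \cdot b$.

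I would then rule out self-overlaps by a case split on the length $\ell$ of a hypothetical proper non-empty prefix of $u_n$ that is also a suffix. For $\ell \le 3$ this is immediate by inspection: the prefixes of length $1,2,3$ are $a$, $aa$, $aab$, while the corresponding suffixes are $b$, $ab$, $bab$, and no pair coincides. For $\ell \ge 4$ the length-$\ell$ prefix of $u_n$ begins with $aabb$, hence contains the factor $bb$ at its own positions $3,4$; equating this prefix with the length-$\ell$ suffix of $u_n$ forces $bb$ to occur in $u_n$ at positions $L-\ell+3$ and $L-\ell+4$, and by the uniqueness observation this forces $L-\ell+3 = 3$, i.e.\ $\ell = L$, contradicting properness. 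Hence $u_n$ has no self-overlap, and the lemma follows.

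I do not expect a genuine obstacle here. The only point requiring care is engineering the family so that the whole word contains a \emph{unique} occurrence of the factor $bb$ --- this is precisely what disposes of all long overlaps in one stroke --- while the finitely many short-overlap cases are handled by direct comparison. A symmetric choice $w = b^n$ would work via runs of $a$'s, but the $bb$-uniqueness enjoyed by $w = a^n$ makes it the cleanest option.
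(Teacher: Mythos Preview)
Your proposal is correct and follows essentially the same approach as the paper, which also takes $w \equiv a^i$ (and mentions $w \equiv b^i$ as an alternative) but simply asserts the self-overlap freeness without further justification. Your argument via the uniqueness of the factor $bb$ is a clean way to supply the details the paper omits.
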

\begin{proof}
Indeed, choosing $w \equiv a^i$ or $w \equiv b^i$ for $i \geq 0$ will always result in $a^2b^2wa^3bab$ being self-overlap free. 
\end{proof}

\begin{proposition}\label{Prop:aabbwaaabab_is_a_group}
Let $w \in \{ a, b \}^\ast$ be any positive word. Then the monoid $I_w = \pres{Inv}{a,b}{a^2b^2wa^3bab = 1}$ is a (one-relator) group.
\end{proposition}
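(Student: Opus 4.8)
The plan is to use the Benois algorithm exactly as in the proof of Theorem~\ref{Theo:aabbaabab_is_group}: it suffices to show that $a$ is invertible in $I_w$, since the defining word $r_w \equiv a^2b^2wa^3bab$ is positive, and once $a$ is known to be invertible, any factorisation of $r_w$ into minimal invertible pieces forces every piece containing $b$ to begin and end with $b$; hence every piece is $a$ or $b$, and $I_w$ is a group. To show $a$ is invertible it is enough to show it is left invertible, and by the Benois algorithm this amounts to exhibiting $a^{-1}$ as an element of the Benois submonoid $\langle X\rangle$, where $X = \Pre(r_w)\cup\Pre(r_w^{-1})$.

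The key step is therefore to write down an explicit product of Benois generators that freely reduces to $a^{-1}$, uniformly in the positive word $w$. Note that $a^2b^2w$ is a prefix of $r_w$ (so it lies in $X$), and that $b^{-1}a^{-1}b^{-1}a^{-3}$ is a suffix of $r_w$, hence its inverse $a^3bab$ is in $X$ — wait, more usefully, the inverse of a suffix lies in $X$, so the word $b^{-1}a^{-1}b^{-1}a^{-3}w^{-1}b^{-2}a^{-2}$ obtained by inverting the whole relator is in $X$, as are shorter suffix-inverses such as $b^{-1}a^{-1}b^{-1}a^{-3}$ and $b^{-1}a^{-1}$. The idea is to mimic \eqref{Eq:aabbaabab_prod}: one wants a telescoping product $(b^{-1}a^{-1}b^{-1}a^{-3}w^{-1})\cdot(a^2b^2w)\cdot(b^{-1}a^{-1})\cdot(a^2b^2w)\cdot\big(\text{something involving }w^{-1}\big)$ in which the $w$'s cancel against the $w^{-1}$'s and the remaining letters collapse to $a^{-1}$. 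Concretely, since $(a^2b^2w)(w^{-1}b^{-1}a^{-1}b^{-1}a^{-3})$ reduces to $a^2b^2 \cdot b^{-1}a^{-1}b^{-1}a^{-3} = a^2 b a^{-1} b^{-1} a^{-3}$, one then appends $a^3 b a^b w\cdots$; the bookkeeping is routine once the right collection of prefixes and suffix-inverses is identified, and the point is that $w$ only ever appears flanked by a matching $w^{-1}$, so its internal structure is irrelevant — exactly why the argument is uniform in $w$.

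The main obstacle is purely combinatorial: finding the correct finite list of Benois generators (prefixes of $r_w$ and inverses of suffixes of $r_w$) whose product telescopes to $a^{-1}$, and verifying the free cancellation. Once $a^{-1}\in\langle X\rangle$ is established, $a$ is left invertible; being a prefix of $r_w$ it is also right invertible, hence invertible, and the proposition follows by the piece argument above. (Combined with Lemma~\ref{Lem:infinitelymanyolp}, taking $w\equiv a^i$ or $w\equiv b^i$ with $r_w$ additionally self-overlap free, this yields the promised infinite family of O'Haresque monoids that are groups.) I would also remark that, exactly as after Lemma~\ref{Lem:abwaab_is_a_group}, an alternative proof avoiding the Benois algorithm can be given via Adian's overlap algorithm: either $a^2b^2$ (or an appropriate short prefix) is a piece, or $I_w$ is a group, and in the former case one derives that a proper prefix is already invertible, contradicting minimality.
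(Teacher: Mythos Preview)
Your overall framework is correct and matches the paper: it suffices to show $a^{-1}\in\langle X\rangle$, where $X=\Pre(r_w)\cup\Pre(r_w^{-1})$, and then the positivity argument finishes things. But the execution has a genuine gap.

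First, you never produce a valid product. The expression you write, $(a^2b^2w)(w^{-1}b^{-1}a^{-1}b^{-1}a^{-3})$, uses the factor $w^{-1}b^{-1}a^{-1}b^{-1}a^{-3}$, which is \emph{not} a Benois generator: every element of $X$ is a prefix of $r_w$ (hence begins with $a$) or a prefix of $r_w^{-1}$ (hence begins with $b^{-1}$), and $w^{-1}b^{-1}a^{-1}b^{-1}a^{-3}$ is neither. The generator you presumably intended is $b^{-1}a^{-1}b^{-1}a^{-3}w^{-1}$, but then the $w^{-1}$ sits at the wrong end and does not cancel against the $w$ in $a^2b^2w$. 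More generally, no Benois generator begins with $w^{-1}$, so your ``telescoping through $w$'' strategy cannot work as described; ``the bookkeeping is routine'' conceals a real obstruction.

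The insight you are missing is that one does not need $w$ at all. Since $a^2b^2$ is always a prefix of $r_w$ and $a^3bab$ is always a suffix, we have $\Pre(a^2b^2)\cup\Pre(b^{-1}a^{-1}b^{-1}a^{-3})\subseteq X$ for every $w$, and already
\[
(a^2b^2)\cdot(b^{-1}a^{-1})\cdot(a^2b)\cdot(b^{-1}a^{-1}b^{-1}a^{-3})=a^{-1},
\]
with each factor in this $w$-independent subset. This is the paper's argument, and it is what makes the proof genuinely uniform in $w$.

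Finally, your proposed ``alternative proof via Adian's overlap algorithm'' is incorrect. Adian's algorithm applies to ordinary special monoids $\pres{Mon}{A}{w=1}$, not to inverse monoids; the remark after Lemma~\ref{Lem:abwaab_is_a_group} is explicitly about $\pres{Mon}{a,b}{abwaab=1}$. For the words $a^2b^2wa^3bab$ here, which are designed to be self-overlap free, Adian's algorithm would output the trivial factorisation and certify nothing --- indeed, the whole point of the proposition is that $I_w$ is O'Haresque, i.e.\ a group despite Adian's algorithm failing to detect it.
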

\begin{proof}
We use the Benois algorithm. Let $X$ be the Benois generators of the presentation. Then we certainly have 
\begin{equation}\label{Eq:a2b2etc}
\Pre(a^2b^2) \cup \Pre(b^{-1}a^{-1}b^{-1}a^{-3}) \subseteq X.
\end{equation}
Let $Y$ be the set on the left-hand side in \eqref{Eq:a2b2etc}. Then $\langle Y \rangle \leq \langle X \rangle$. As in the proof of Theorem~\ref{Theo:aabbaabab_is_group}, it suffices to show $a^{-1} \in \langle Y \rangle$. But this is immediate, as 
\begin{align*}
(a^2b^2) \cdot (b^{-1}a^{-1}) \cdot (a^2b) \cdot (b^{-1}a^{-1}b^{-1}a^{-3}) = a^{-1}
\end{align*}
and the left-hand side is clearly a product of elements in $Y$.
\end{proof}

We conclude by Lemma~\ref{Lem:infinitelymanyolp} that we have proved the following.

\begin{theorem}
There exists an infinite family of O'Haresque special one-relation inverse monoids.
\end{theorem}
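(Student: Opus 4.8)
The plan is to assemble the theorem directly from the two preceding results in this section, namely Proposition~\ref{Prop:aabbwaaabab_is_a_group} and Lemma~\ref{Lem:infinitelymanyolp}. Recall that to call a presentation $\pres{Inv}{A}{v=1}$ \emph{O'Haresque} we need three things: (1) $v$ is positive; (2) $v$ is self-overlap free; and (3) the factorisation of $v$ into minimal invertible pieces in $\pres{Inv}{A}{v=1}$ is non-trivial. First I would fix the family: for each positive word $w \in \{a,b\}^\ast$, set $v_w \equiv a^2 b^2 w a^3 b a b$ and $I_w = \pres{Inv}{a,b}{v_w = 1}$, exactly as in Proposition~\ref{Prop:aabbwaaabab_is_a_group}. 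Property (1) is immediate, since $v_w$ is visibly a word in $\{a,b\}^\ast$ with no inverse letters.

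Next I would verify property (2) for infinitely many members of the family. This is precisely the content of Lemma~\ref{Lem:infinitelymanyolp}: taking $w \equiv a^i$ (or $w \equiv b^i$) for $i \geq 0$ makes $v_w = a^2 b^2 a^i a^3 b a b$ self-overlap free, and these give infinitely many pairwise distinct defining words (indeed the length strictly increases with $i$), hence infinitely many presentations. So we may restrict attention to this subfamily $\{I_{a^i} : i \geq 0\}$, each member of which satisfies (1) and (2).

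For property (3) I would invoke Proposition~\ref{Prop:aabbwaaabab_is_a_group}, which says that each $I_w$ is in fact a group. Being a group means every generator is invertible, so in particular $a$ is an invertible prefix of $v_w$; since $v_w$ is a positive word strictly longer than $a$, this prefix $a$ is a proper non-empty invertible prefix, so the factorisation of $v_w$ into minimal invertible pieces is certainly non-trivial (one could say more — every piece is $a$ or $b$ — but non-triviality is all that is required). Thus each $I_{a^i}$ is O'Haresque, and since there are infinitely many of them, the theorem follows.

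The main thing to be careful about — rather than a genuine obstacle, since the hard work is already done in the two cited results — is making sure the chosen subfamily really is infinite as a family of \emph{presentations}, i.e. that the words $v_{a^i}$ are genuinely distinct (they are, by length), and confirming that ``non-trivial factorisation'' in the definition of O'Haresque is correctly matched by ``has a proper non-empty invertible prefix'', which holds because $a$ is invertible while $v_w \not\equiv a$. No new computation is needed beyond what appears in Proposition~\ref{Prop:aabbwaaabab_is_a_group} and Lemma~\ref{Lem:infinitelymanyolp}.
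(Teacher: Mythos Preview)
Your proposal is correct and follows exactly the paper's approach: the paper simply remarks that the theorem follows from Lemma~\ref{Lem:infinitelymanyolp} together with Proposition~\ref{Prop:aabbwaaabab_is_a_group}, and you have spelled out precisely this combination, including the observation that being a group forces $a$ to be a proper invertible prefix and hence the piece factorisation is non-trivial. No changes are needed.
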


The above theorem (and the original O'Hare monoid) demonstrates that, unlike in the case of ordinary special monoids, the properties of having a self-overlap free defining word and being a group are not strongly linked for special one-relation inverse monoids -- or, if they are, then they are linked in some fundamentally different way. We end with an open question, which has a similar flavour to the questions asked in \S\ref{Sec:_OR_groups_classes} about $\pos, \cred, \red$ and $\any$. 

\begin{question}
Let $\ovl$ be the class of one-relator groups consisting of all $G$ such that $G$ is  isomorphic to $\pres{Inv}{A}{w=1}$ for some overlap-free word $w \in A^\ast$. Is there a group-theoretic characterisation of $\ovl$?
\end{question}

For example, one can easily check that $I_\varepsilon = \pres{Inv}{a,b}{a^2b^2a^3bab = 1}$ is free-by-cyclic. In particular it is residually finite. A natural question is whether there exists some $w \in A^\ast$ such that $I_w$ is not residually finite, or indeed whether every one-relator group in $\ovl$ is residually finite. We suspect, for the latter question, that this is not the case. 

Finally, note that the word $w$ in the statement of Proposition~\ref{Prop:aabbwaaabab_is_a_group} is arbitrary -- none of its properties, other than the fact that it is positive, are used in the proof. Thus the positive words $a^2b^2$ and $a^3bab$ can, in a sense, be seen as a pair of ``group-making words''; place them at the start resp. end of a given word, and the inverse monoid is a group. Of course, many other similar group-making words can be found, but ensuring that the resulting word is overlap-free (to ensure O'Haresquity) requires some care. We ask some elementary questions regarding group-making words.

\begin{question}\label{Quest:GMW}
Let $\mathfrak{G}(A)$ be the subset of $A^\ast \times A^\ast$ consisting of all pairs of words $(u, v)$ with the property that: for every word $w \in A^\ast$, the monoid $\pres{Inv}{A}{uwv=1}$ is a group. What are the language-theoretic properties of $\mathfrak{G}(A)$? Is it a rational subset of $A^\ast \times A^\ast$? If $u \in A^\ast$, does there always exist some $v \in A^\ast$ such that $(u, v) \in \mathfrak{G}(A)$?
\end{question}

Thus, for example, we have $(ab,ba), (a^2b^2, a^3bab) \in \mathfrak{G}(A)$. Note that when $|A|=2$, the answer to the last part of Question~\ref{Quest:GMW} is clearly ``yes'' for non-empty $u$.

\clearpage
\section{A counterexample to a conjecture by Gray \& Ru\v skuc}\label{Sec:_Counterexample_Bob_Nik}

\noindent Gray \& Ru\v skuc \cite{Gray2021} conjectured that the Benois algorithm always correctly computes the decomposition into minimal invertible pieces of the defining word of a special one-relation inverse monoid. We now refute, by way of counterexample, this conjecture.

\begin{theorem}\label{Thm:Counterexample_to_benois}
There exists a special one-relation inverse monoid 
\[
\pres{Inv}{A}{w=1}
\] such that the Benois algorithm does not correctly compute the factorisation of $w$ into minimal invertible pieces.
\end{theorem}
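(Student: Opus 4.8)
The plan is to exhibit an explicit word $w \in \overline{A}^\ast$ for which the Benois algorithm ``sees'' an invertible piece that is not actually minimal (or, more dramatically, a piece that the algorithm fails to detect), and then verify the discrepancy by independent means. The natural strategy is to engineer a word where the Benois submonoid $\langle X \rangle$ with $X = \Pre(w) \cup \Pre(w^{-1})$ is ``too small'' to detect some genuine invertibility — that is, a prefix $p$ of $w$ is in fact (left) invertible in $\pres{Inv}{A}{w=1}$, witnessing this via some consequence of the relation that is \emph{not} purely a statement about $\langle X \rangle$, while $p^{-1} \notin \langle X \rangle$. Since $\langle X \rangle$ only records right-invertibility arising from the ``trivial'' prefix structure, the gap must come from a cancellation that genuinely uses the relation $w =_I 1$ in a nonlinear way (e.g. substituting $w$ for $1$ inside a longer word before reducing). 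The O'Haresque examples of \S\ref{Sec:_Ohareseque_family} already show the Benois algorithm can \emph{succeed} where Adian fails, so here I need the reverse: a word where invertibility of a proper prefix is forced by the inverse monoid structure but invisible to prefix-closure. A non-reduced or non-cyclically-reduced $w$ is permissible and, as the excerpt flags, the counterexample word is in fact neither positive, cyclically reduced, nor reduced — so I would build in some $aa^{-1}$ cancellation that only becomes exploitable after one application of the relation.

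Concretely, I would first fix a small alphabet (likely $A = \{a,b\}$ or $\{a,b,c\}$), write down a candidate $w$, and compute $X$ and then $\langle X \rangle$ explicitly, simplifying the generating set as in the worked Example above. The second step is to run the Benois algorithm by hand: for each proper non-empty prefix $p$ of $w$, decide membership $p^{-1} \in \langle X \rangle$ — the excerpt points to the automaton construction of \cite[Remark~4.3]{Gray2021} and Benois' theorem \cite{Benois1969} to make this effective — and thereby read off the factorisation $w \equiv w_1 \cdots w_k$ that the algorithm produces. The third, and crucial, step is to show that the \emph{true} factorisation into minimal invertible pieces differs: I would produce a prefix $p$ of $w$ that the Benois algorithm declares non-invertible, but which is actually invertible in $I = \pres{Inv}{A}{w=1}$. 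To prove genuine invertibility I can use either Stephen's procedure (as was done for the O'Hare monoid) to exhibit explicitly that the Schützenberger/Munn automaton of $p$ has the required trimness, or — cleaner when available — a direct algebraic argument: show $I$ is a group (so \emph{every} generator, hence every prefix, is invertible) while the Benois factorisation is trivial or otherwise coarser. The contrast ``$I$ is a group but Benois sees no nontrivial pieces'' is exactly the strongest possible form of the counterexample and mirrors the group-making phenomenon of Lemma~\ref{Lem:abwaab_is_a_group}, except now arranged so that no prefix-inverse lands in $\langle X \rangle$.

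The main obstacle is the asymmetry between the two halves: it is easy to make $\langle X \rangle$ small (just avoid prefixes whose inverses telescope), and it is easy to make $I$ a group (use a group-making pattern), but doing \emph{both at once} is delicate, because group-making patterns like $abwaab$ work precisely by producing a prefix whose inverse \emph{is} a product of Benois generators. So I would need a group-making mechanism that routes through the inverse monoid axioms in a way the Benois submonoid cannot replicate — for instance, invertibility of a prefix $p$ deduced not from $p^{-1} \in \langle X \rangle$ but from first establishing that some \emph{other} element is invertible and then using closure of the group of units under products and inverses, where that other element's invertibility itself came from an application of the relation rather than from prefix structure. Verifying that $p^{-1} \notin \langle X \rangle$ rigorously (rather than just ``we checked the automaton'') is the part most prone to error; I would carry it out by abelianisation or a length/Stallings-foldings argument on the finite generating set of $\langle X \rangle$, analogous to the abelianisation trick used in the base case of Lemma~\ref{Lem:_prefixes_of_w=1_means_all}, and cross-check with the explicit automaton. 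Once a candidate $w$ survives both checks, the write-up is routine: state $w$, compute $X$ and the simplified $\langle X \rangle$, note the Benois factorisation is (say) trivial, then prove $I$ is a group by an explicit Stephen's-procedure or group-making argument, yielding the required contradiction with the conjecture.
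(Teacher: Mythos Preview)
Your outline --- exhibit a concrete $w$, compute $\langle X\rangle$, read off the Benois factorisation, then prove independently that a further prefix is invertible --- is exactly the shape of the paper's argument. However, your \emph{mechanism} for the last step is not the one that works here, and the difference matters.

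You propose to arrange that $I=\pres{Inv}{A}{w=1}$ is a \emph{group} while the Benois factorisation remains coarse. The paper does not do this, and in its counterexample $I$ is emphatically \emph{not} a group: there is a surjection $I\twoheadrightarrow B=\pres{Inv}{x}{xx^{-1}=1}$ onto the bicyclic monoid (send $a\mapsto 1$, $b\mapsto x$), so $b$ is never invertible. The discrepancy the paper exploits is finer: Benois finds the factorisation $(a)(bab^{-1}ba^{-1}b^{-1})$, while the true minimal factorisation is $(a)(bab^{-1})(ba^{-1}b^{-1})$. So the counterexample is a failure to split one Benois piece into two, not a failure to detect that everything is invertible.

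The key device you underweight is the one you mention only in passing: the word is \emph{not reduced}. The paper takes $w\equiv abab^{-1}ba^{-1}b^{-1}$, whose free reduction is just $a$. Since $w$ is right invertible (it equals $1$), Lemma~2.1 gives $w=_I a$, hence $a=_I 1$. From $a=1$ one then cascades: the suffix $bab^{-1}ba^{-1}b^{-1}$ equals $1$, so $b$ is right invertible, so $bb^{-1}=1$, and then $bab^{-1}=b\cdot 1\cdot b^{-1}=1$. None of these deductions is visible to the Benois submonoid, because they use the \emph{consequence} $a=1$ of the relation rather than mere prefix structure; one checks directly that $(abab^{-1})^{-1}\equiv ba^{-1}b^{-1}a^{-1}\notin\langle X\rangle=\langle a,a^{-1},b,bab^{-1}\rangle$. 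This is precisely the ``invertibility deduced by first establishing some other element is invertible and then cascading'' that you describe --- but it does not require, and in fact is incompatible with, $I$ being a group.

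In short: drop the group-making goal. The right target is a non-reduced $w$ whose free reduction is very short (a single letter suffices), so that Lemma~2.1 hands you an invertible generator for free; then chase the consequences inside $I$ to split a Benois piece, and separately verify $\langle X\rangle$ is too small to see that split. Your verification strategy for the $\langle X\rangle$ side (automaton / Stallings-type argument) is fine as stated.
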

\begin{proof}
Let $A = \{ a, b\}$, and let $w \equiv abab^{-1}ba^{-1}b^{-1}$. Let $I = \pres{Inv}{A}{w=1}$. Then we claim that (1) the factorisation of $w$ into minimal invertible pieces in $I$ is as $(a)(bab^{-1})(ba^{-1}b^{-1})$; but that (2) the Benois algorithm factorises $w$ as $(a)(bab^{-1}ba^{-1}b^{-1})$. 

For (1), note that as the defining word $abab^{-1}ba^{-1}b^{-1}$ is right invertible, it is equal to its reduced form, which is $a$. Hence $a=1$ in $I$. It follows that also $bab^{-1}ba^{-1}b^{-1} = 1$ in $I$. Thus $b$ is right invertible in $I$, so $bb^{-1} = 1$. But then $bab^{-1} = 1$ in $I$, too. Hence $bab^{-1}$ (and similarly also $ba^{-1}b^{-1}$) is invertible. Any finer factorisation would imply that $b$ would be invertible, and hence $I$ would be the infinite cyclic group $\mathbb{Z}$. But this is impossible: the map $\varphi$ from $I$ to the bicyclic monoid $B = \pres{Inv}{x}{xx^{-1}=1}$ defined by $a \mapsto 1$ and $b \mapsto x$ extends to a surjective homomorphism, as $\varphi(abab^{-1}ba^{-1}b^{-1}) = xx^{-1}xx^{-1} = 1$ in $B$; but $B$ cannot be a quotient of $\mathbb{Z}$. One may also easily see that this is not the case using Stephen's procedure \cite{Stephen1987}. 

For (2), let $F_2$ be the free group on $\{ a, b \}$. We must solve the membership problem in the submonoid $\langle X \rangle$ of $F_2$ generated by the elements
\begin{align*}
X &= \{ a, ab, aba, abab^{-1}, abab^{-1}b, abab^{-1}ba^{-1}, abab^{-1}ba^{-1}b^{-1} \} \:\cup \\
&\:\cup \{ b, ba, bab^{-1}, bab^{-1}b, bab^{-1}ba^{-1}, bab^{-1}ba^{-1}b^{-1}, bab^{-1}ba^{-1}b^{-1}a^{-1}\}.
\end{align*}
Freely reducing all words, and simplifying the generating set, we have
\begin{align*}
\langle X \rangle = \langle  a, a^{-1}, b, bab^{-1} \rangle.
\end{align*}
Now every element of $\langle X \rangle$ is right invertible. As $a^{-1} \in \langle X \rangle$, the Benois algorithm tells us that $a^{-1}$ is right invertible; hence $a$ is left invertible and thereby also invertible, being a prefix of $w$. To check the other prefixes, one may employ a number of solutions to the membership problem for $\langle X \rangle$ in $F_2$. Indeed, the language $L$ of words in $\overline{A}^\ast$ representing an element of $\langle X \rangle$ is a regular language, as free reduction can be simulated by a monadic rewriting system (see \cite{Book1982}). A finite state automaton which accepts this language $L$ can be constructed using the method outlined in \cite[Remark~4.3]{Gray2021}, see also \cite[Theorem~4.1]{Kambites2007}. We find the automaton: 
\begin{center}
\begin{tikzpicture}[initial text=,>=open triangle 60,thick,node distance=2cm,on grid,auto,accepting/.style=accepting by arrow,every state/.style={very thick,fill=black!20},every loop/.style={min distance=0mm,in=45,out=135,looseness=8}]

\node[state,initial,accepting,accepting distance=2cm,initial distance=2cm] (q_0) at (0,0) {$q_0$};
\node[state] (q_1) at (-3,-3) {$q_1$};
\node[state] (q_2) at (3,-3) {$q_2$};

\path[-open triangle 60, thick] 
(q_1) edge[bend right=10] node[swap] {$b^{-1}$} (q_0)
(q_1) edge[bend left=10] node [] {$\varepsilon$} (q_0)
(q_1) edge[bend right = 10] node [swap] {$\varepsilon$} (q_2)
(q_2) edge[bend right = 10] node [swap] {$a$} (q_1)
(q_0) edge[bend right = 10] node [swap] {$b$} (q_2)
(q_2) edge[bend right = 10] node [swap] {$\varepsilon$} (q_0)
(q_0) edge [loop] node {$1, a, b, a^{-1}$} ();
\end{tikzpicture}
\end{center}
A word represents an element of $\langle X \rangle$ if and only if it is accepted by this automaton; for example, $ba^nb^{-1}$ represents an element of $\langle X \rangle$ for every $n \geq 0$. We see that in the set $\{ a^{-1}, (ab)^{-1}, (aba)^{-1}, \dots \}$ of inverses of proper non-empty prefixes of $w$, only $a^{-1}$ is accepted by this automaton. In particular, $(abab^{-1})^{-1}\equiv ba^{-1}b^{-1}a^{-1}$ is not accepted, even though $abab^{-1}$ is invertible. Hence the factorisation into minimal invertible factors of $w$ obtained by the Benois algorithm is $(a)(bab^{-1}ba^{-1}b^{-1})$. This is, as we saw in (1), not the correct factorisation.
\end{proof}

An improved algorithm, which deals with the above counterexample, will appear in future work by the author. 

\section*{Acknowledgements}

The author thanks the Dame Kathleen Ollerenshaw Trust for funding his current research at the University of Manchester. Some of the research appearing in this article was carried out while the author was a Ph.D. student at the University of East Anglia. The author thanks Robert D. Gray, John Meakin, and Stuart Margolis for helpful discussions.

\bibliographystyle{plain}

{
\bibliography{onerelatorinverseunits.bib}
}
\end{document}